\newtheorem{thm}{Theorem}[section]
\newtheorem{prop}[thm]{Proposition}
\newtheorem{cor}[thm]{Corollary}
\newtheorem{lem}[thm]{Lemma}
\newtheorem{thmx}{Theorem}
\newtheorem{definition}[thm]{Definition}
\theoremstyle{definition}
\newtheorem{example}[thm]{Example}
\newtheorem{fact}[thm]{Fact}
\newtheorem{note}[thm]{Note}
\newtheorem{notation}[thm]{Notation}
\newtheorem{question}[thm]{Question}
\newtheorem{claim}{Claim}
\numberwithin{equation}{section}
\theoremstyle{remark}
\newtheorem{remark}[thm]{Remark}
\newcommand{\F}{\mathbb{F}}  
\newcommand{\N}{\mathbb{N}}
\newcommand{\Q}{\mathbb{Q}}
\newcommand{\Z}{\mathbb{Z}}  
\newcommand{\A}{\mathbb{A}}  
\newcommand{\mor}[1]{\ensuremath{\overset{#1}{\longrightarrow}}} 
\DeclareMathOperator{\Ho}{H} 
\DeclareMathOperator{\Fr}{Frob} 
\DeclareMathOperator{\Tr}{Tr} 
\DeclareMathOperator{\Stab}{Stab} 
\DeclareMathOperator{\Aut}{Aut} 
\DeclareMathOperator{\Irr}{Irr} 
\DeclareMathOperator{\Poly}{Poly} 
\newcommand{\catname}[1]{{\normalfont\operatorname{#1}}}
\newcommand{\FI}{\catname{FI}}
\begin{document}


\title{A trace formula for the distribution of rational $G$-orbits in ramified covers, adapted to representation stability}


\author{Nir Gadish}






\maketitle

\begin{abstract}
A standard observation in algebraic geometry and number theory is that a ramified cover of an algebraic variety $\widetilde{X}\rightarrow X$ over a finite field $\F_q$ furnishes the rational points $x\in X(\F_q)$ with additional arithmetic structure: the Frobenius action on the fiber over $x$. For example, in the case of the Vieta cover of polynomials over $\F_q$ this structure describes a polynomial's irreducible decomposition type.

Furthermore, the distribution of these Frobenius actions is encoded in the cohomology of $\widetilde{X}$ via the Grothendieck-Lefschetz trace formula. This note presents a version of the trace formula that is suited for studying the distribution in the context of representation stability: for certain sequences of varieties $(\widetilde{X}_n)$ the cohomology, and therefore the distribution of the Frobenius actions, stabilizes in a precise sense.

We conclude by fully working out the example of the Vieta cover of the variety of polynomials. The calculation includes the distribution of cycle decompositions on cosets of Young subgroups of the symmetric group, which might be of independent interest.
\end{abstract}





\section{Introduction}
The increasingly important notion of representation stability introduced by Church-Farb \cite{CF} identifies sequences of spaces with group actions $(G_n\curvearrowright X_n)_{n\in \N}$ whose cohomology groups exhibit a kind of stabilization as representations for $n\rightarrow \infty$. One then hopes to translate the observed cohomological stabilization into arithmetic results via the bridge provided by the Grothendieck-Lefschetz trace formula. This was realized e.g. by Church-Ellenberg-Farb \cite{CEF-pointcounts} in the case of statistics of square-free polynomials and maximal tori in $\operatorname{Gl}_n$ over finite fields. One difficulty that this program faces is the possible presence of nontrivial stabilizers of the group actions, and their effect on the trace formula. This article offers a treatment of actions with stabilizers and the adaptation of the trace formula to representation stability applications. The formula, presented in Theorem \ref{thm:intro_main} below, is proved using standard methods and will not be considered new by algebraic-geometers\footnote{For example, the same ideas and definitions appear in Grothendieck's \cite{Groth} and in Serre's \cite{Serre}.}. Rather, it is presented as a `ready for use' tool to be applied in the context of representation stability.

Using the approach presented here, we extend the project initiated in \cite{CEF-pointcounts} to include the statistics of polynomials with possible root multiplicities (see details in \S\ref{subsec:intro-polynomials}). Let us remark that much of the work that goes into polynomial statistics often passes through calculations on square-free polynomials and ignores the rest (the latter being relatively uncommon), see e.g. \cite{ABR}[Section 4]. The calculations below suggest a way to handle more general polynomials: we introduce an algebra of \emph{division symbols} on the space of polynomials, and show that these give rise to functions that serve as a direct link between the statistics of polynomials and those of symmetric groups (see subsection \S\ref{subsec:intro-polynomials}).

\subsection{Distribution of rational orbits}\label{subsec:intro-trace}
Let $\widetilde{X}$ be an algebraic variety over a finite field $\F_q$, endowed with an action of a finite group $G$. Then the variety of orbits $X = \widetilde{X}/G$ acquires arithmetic information from $\widetilde{X}$: a rational point $x\in X(\F_q)$ corresponds to a $G$-orbit of $\widetilde{X}(\overline{\F}_q)$ that is stable under the Frobenius automorphism $\Fr_q$. Thus for every $x\in X(\F_q)$ the Frobenius determines a $G$-equivariant permutation $\sigma_x$ on a transitive $G$-set, and this additional information distinguishes rational points in $X$ in a subtle way. For example, let $X$ be the space of monic degree $d$ polynomials. Ordering the roots of a polynomial gives that $X$ is the quotient $\A^d/S_d$ with $S_d$, the symmetric group on $d$ letters, acting by permuting the entries of $\A^d$. Then for every polynomial $f(t)\in \F_q[t]$ the permutation $\sigma_f$ encodes precisely the decomposition type of $f$ into irreducible factors over $\F_q$.

On the other hand, following the philosophy of the Weil conjectures, it is known that the action of $\Fr_q$ on the \'{e}tale cohomology groups $\Ho^*_{\acute{e}t}(\widetilde{X}_{/\overline{\F}_q};\Q_\ell)$ encodes arithmetic information. At the same time, an action $G\curvearrowright \widetilde{X}$ induces a $G$-representation on $\Ho^*(\widetilde{X})$, and it is natural to ask: What arithmetic information is encoded by the joint action of $\Fr_q$ and $G$ on $\Ho^*(\widetilde{X})$?

One answer, given below, is that the information encoded in $\Ho^*(\widetilde{X})$ is in some sense the distribution of the permutations $\sigma_x$ attached to rational points $x\in X(\F_q)$. However, it is not initially clear what one should mean by a ``distribution" of permutations $\sigma_x$ on abstract $G$-orbits. The Tanakian point of view tells us instead to examine how $\sigma_x$ acts on $G$-representations, or equivalently: how it evaluates on $G$-characters.

We therefore detect the distribution of the permutations $\sigma_x$ by evaluating them on class functions of $G$ as follows. Let $\chi:G\mor{} \mathbb{C}$ be a class function. If the quotient map $p:\widetilde{X}\mor{}X$ is unramified at $x\in X(\F_q)$ (i.e. the stabilizer of a lift $\widetilde{x}\in p^{-1}(x)$ is trivial), then $\sigma_x$ determines an element $g_x\in G$, unique up to conjugacy, by $\sigma_x(\widetilde{x})=g_{x}.\widetilde{x}$ for a chosen lift $\widetilde{x}\in p^{-1}(x)$. Changing the lift $\widetilde{x}$ only amounts to conjugating $g_x$, so it is possible to unambiguously define $\chi(\sigma_x) := \chi(g_x)$.

However, when $p$ is ramified at $x$, the permutation $\sigma_x$ no longer determines a conjugacy class: if $H_{\widetilde{x}}\subseteq G$ is the stabilizer of a lift $\widetilde{x}\in p^{-1}(x)$, then the condition $\sigma_x(\widetilde{x}) = g_{x}.\widetilde{x}$ only determines a coset $g_{x}H_{\widetilde{x}}$. The best one can do in this situation is to average:
\begin{definition}[\textbf{Evaluating $\sigma_x$ on class functions}] \label{def:intro_evaluate_sigmax}
Let $\chi:G\mor{} \mathbb{C}$ be a class function. For $x\in X(\F_q)$ and a lift $\widetilde{x}\in p^{-1}(x)$ with stabilizer $H_{\widetilde{x}}$ define
$$
\chi(\sigma_x) := \frac{1}{|H_{\widetilde{x}}|}\sum_{h\in H_{\widetilde{x}}}\chi(g_{x}h)
$$
where $g_{x}\in G$ is any element satisfying $\sigma_x(\widetilde{x})=g_{x}.\widetilde{x}$.
\end{definition}
Theorem \ref{thm:intro_main} below relates the sum $\displaystyle{ \sum_{x\in X(\F_q)} \chi(\sigma_x)}$ to the representation $\Ho^*(\widetilde{X})$. As $\chi$ ranges over all class functions, these sums in some sense capture the distribution of $\sigma_x$.

\begin{thmx}[\textbf{Frobenius distribution trace formula}]\label{thm:intro_main}
Let $G$ be a finite group, acting on an algebraic variety $\widetilde{X}$ over the finite field $\F_q$, and let $X=\widetilde{X}/G$ as above. Fix a prime $\ell\gg 1$ coprime to $q|G|$ and let $\Ho^i_c(\widetilde{X})$ denote the compactly supported $\ell$-adic cohomology $\Ho_{c,\acute{e}t}^i(\widetilde{X}_{\overline{\F}_q};\Q_\ell)$. Decompose $\Ho^i_c(\widetilde{X})\otimes \overline{\Q}_\ell$ into generalized eigenspaces\footnote{Here ``generalized eigenspaces" means all possibly nontrivial Jordan blocks. This is to avoid questions of possible non-semisimplicity of the Galois action, which are not of interest to us in the current context.} of the $\Fr_q$ action:
$$
\Ho^i_c(\widetilde{X})\otimes \overline{\Q}_\ell = \bigoplus_{\lambda\in \overline{Q}_\ell} \Ho^i_c(\widetilde{X})_{\lambda}.
$$
Note that this sum includes only finitely many nonzero summands, and that each $\Ho^i_c(\widetilde{X})_{\lambda}$ is a $G$-subrepresentation. Then for every class function $\chi:G\mor{}\overline{\Q}_\ell$,
\begin{equation} \label{eq:intro_G-L-trace-formula_cc}
\sum_{x\in X(\F_q)} \chi(\sigma_x) = \sum_{\lambda\in \overline{\Q}_\ell} \lambda \sum_{i=0}^{\infty} (-1)^i \langle \Ho^i_c(\widetilde{X})^*_\lambda , \chi \rangle_G
\end{equation}
where the inner product $\langle V,\chi\rangle_G$ for a $G$-representation $V$ is the standard character inner product of $\chi$ with the character of $V$. Note again that this is really a finite sum.

When $\widetilde{X}$ is further taken to be smooth, Poincar\'{e} duality implies
\begin{equation} \label{eq:intro_G-L-trace-formula}
\sum_{x\in X(\F_q)} \chi(\sigma_x) = q^{\dim(\widetilde{X})}\sum_{\lambda} \lambda^{-1} \sum_{i=0}^{\infty} (-1)^i \langle \Ho^i(\widetilde{X})_\lambda , \chi \rangle_G.
\end{equation}
\end{thmx}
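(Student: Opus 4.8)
The plan is to realize the left-hand side as a sum of local Frobenius traces of a suitable constructible sheaf on $X$ and then invoke the Grothendieck--Lefschetz trace formula. By linearity of both sides in $\chi$, I would first reduce to the case where $\chi = \chi_V$ is the character of an honest $G$-representation $V$ over $\overline{\Q}_\ell$, since such characters span all class functions. To $V$ I attach the constant sheaf $\underline V$ on $\widetilde X$, equipped with the $G$-equivariant structure combining the action on $\widetilde X$ with the representation on $V$, and I form
\[
\mathcal F_V := (p_* \underline V)^G,
\]
a constructible $\overline{\Q}_\ell$-sheaf on $X$. Because $\ell$ is coprime to $|G|$, the averaging idempotent $\frac{1}{|G|}\sum_{g\in G} g$ exhibits $(-)^G$ as an exact, Frobenius-equivariant direct-summand functor, which lets me move invariants freely past $p_*$ and past cohomology later on.

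The heart of the argument --- and the step I expect to be most delicate --- is the pointwise identity
\[
\Tr\big(\Fr_q \mid (\mathcal F_V)_{\bar x}\big) = \chi_V(\sigma_x) \qquad (x\in X(\F_q)),
\]
with the right-hand side as in Definition \ref{def:intro_evaluate_sigmax}. Fixing a lift $\widetilde x$ with stabilizer $H := H_{\widetilde x}$ identifies the geometric fiber $p^{-1}(\bar x)$ with $G/H$, so the stalk $(p_*\underline V)_{\bar x}$ is the space of maps $G/H \to V$ and its $G$-invariants compute to $V^{H}$ by equivariance (evaluation at $\widetilde x$). The care is needed in tracking the Frobenius action: since $\underline V$ carries no arithmetic structure, $\Fr_q$ acts on the stalk purely through the permutation $\sigma_x$ of the fiber, and unwinding the identification shows that on $V^H$ it is implemented by $\rho(g_x)$, where $\rho$ denotes the representation on $V$ and $\sigma_x(\widetilde x) = g_x.\widetilde x$. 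Writing $P = \frac{1}{|H|}\sum_{h\in H}\rho(h)$ for the projector onto $V^H$ and using $\Tr(\rho(g_x)|_{V^H}) = \Tr(\rho(g_x)P \mid V)$ then yields
\[
\Tr\big(\Fr_q \mid (\mathcal F_V)_{\bar x}\big) = \frac{1}{|H|}\sum_{h\in H}\chi_V(g_x h),
\]
exactly the averaged value of Definition \ref{def:intro_evaluate_sigmax}. The subtle points to verify here are that $\rho(g_x)$ genuinely preserves $V^H$ (a consequence of $x$ being $\F_q$-rational, so that $\sigma_x$ is a well-defined $G$-equivariant automorphism of the fiber) and that the two conventions for how $\Fr_q$ permutes the fiber are reconciled.

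With the pointwise identity in hand, I would apply the Grothendieck--Lefschetz trace formula for the constructible sheaf $\mathcal F_V$ on $X$,
\[
\sum_{x\in X(\F_q)} \chi_V(\sigma_x) = \sum_{i} (-1)^i \Tr\big(\Fr_q \mid \Ho^i_c(X_{\overline{\F}_q}, \mathcal F_V)\big),
\]
and then identify the cohomology. Since $p$ is finite, hence proper, $p_* = p_!$ is exact and $\Ho^*_c(X, p_*\underline V) = \Ho^*_c(\widetilde X)\otimes V$; pushing the exact functor $(-)^G$ through gives $\Ho^i_c(X, \mathcal F_V) = (\Ho^i_c(\widetilde X)\otimes V)^G$, with $\Fr_q$ acting through its action on $\Ho^i_c(\widetilde X)$ alone. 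Decomposing $\Ho^i_c(\widetilde X)\otimes\overline{\Q}_\ell = \bigoplus_\lambda \Ho^i_c(\widetilde X)_\lambda$ into generalized Frobenius eigenspaces --- each a $G$-subrepresentation --- the trace on the $\lambda$-part is $\lambda\cdot\dim(\Ho^i_c(\widetilde X)_\lambda\otimes V)^G$, and a short character computation rewrites this dimension as the inner product $\langle \Ho^i_c(\widetilde X)^*_\lambda, \chi_V\rangle_G$ (the dual accounting for the $g\mapsto g^{-1}$ in the $\overline{\Q}_\ell$-pairing). Summing over $i$ and $\lambda$ produces \eqref{eq:intro_G-L-trace-formula_cc}. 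Finally, for $\widetilde X$ smooth of dimension $d$, Poincar\'e duality identifies $\Ho^i_c(\widetilde X)^*_\lambda$ with $\Ho^{2d-i}(\widetilde X)_{q^d/\lambda}$ as $G$-representations (the Tate twist being $G$-trivial); substituting the new summation variable $\lambda \mapsto q^d\lambda^{-1}$, reindexing $i \mapsto 2d-i$ (which preserves the sign), and using this identification converts \eqref{eq:intro_G-L-trace-formula_cc} into \eqref{eq:intro_G-L-trace-formula}.
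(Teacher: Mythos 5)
Your proposal is correct and takes essentially the same route as the paper's own proof: the same sheaf $(p_*\underline{V})^G$ attached to a representation $V$ realizing $\chi$, the same stalk computation identifying $\Tr(\Fr_q)$ on $V^{H_{\widetilde{x}}}$ with the averaged value $\frac{1}{|H|}\sum_{h}\chi(g_xh)$ via the projector $\frac{1}{|H|}\sum_h \rho(h)$, the same application of Grothendieck--Lefschetz followed by the invariants/transfer identification $\Ho^i_c(X;\mathcal{F}_V)\cong(\Ho^i_c(\widetilde{X})\otimes V)^G$ and the same generalized-eigenspace bookkeeping. The only cosmetic differences are that you phrase the transfer step as an averaging idempotent where the paper uses an explicit transfer morphism (Lemma~\ref{lem:transfer}), you work directly with $\overline{\Q}_\ell$-coefficients where the paper invokes a splitting-field argument to realize representations over $\Q_\ell$, and you spell out the Poincar\'e duality reindexing that the paper leaves as a one-line assertion.
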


\subsection{Implications of representation stability}
Without going into the full detail of the \cite{CEF-FI} theory of representation stability, it provides examples of sequences of spaces $(\widetilde{X}_n)_{n\in \N}$ where the symmetric group $S_n$ acts on $\widetilde{X}_n$, and where the induced representations $H^i(\widetilde{X}_n)$ stabilize in the following sense: if $(\chi_n:S_n\rightarrow \Q)_{n\in \N}$ is a certain natural sequence of class functions (namely, given by a character polynomial, defined explicitly below) then the character inner products 
\begin{equation}\label{eq:intro_repstability}
\langle H^i(\widetilde{X}_n), \chi_n\rangle_{S_n}
\end{equation}
become independent of $n$ for $n\gg 1$. In the algebraic setting, the same stabilization occurs within every eigenspace of $\Fr_q$ (this observation follows immediately from the Noetherian property of the category $\FI$, see \cite{CEF-FI}). Denote the stable values of these inner products by
$$\langle H^i(\widetilde{X}_\infty)_\lambda, \chi_\infty\rangle.$$

This phenomenon was used in \cite{CEF-pointcounts}, along with the Grothendieck-Lefschetz trace formula in the unramified context, to demonstrate that the factorization statistics of degree $d$ square-free polynomials over $\F_q$ and maximal tori in $\operatorname{GL}_d(\F_q)$ tend to a limit as $d\rightarrow \infty$ (see \cite[Theorem 1 and 5.6 respectively]{CEF-pointcounts}.

A general type of result that one gets by combining representation stability and the trace formula of Theorem \ref{thm:intro_main} is the following.
\begin{cor}[\textbf{Limiting arithmetic statistics}]\label{cor:intro_repstability}
Let $(\widetilde{X}_n)_{n\in \N}$ be a sequence of smooth algebraic varieties over $\F_q$ where $S_n$ acts on $\widetilde{X}_n$, and denote the quotients by $X_n = \widetilde{X}_n/S_n$. Suppose that the cohomology $\Ho^i(\widetilde{X}_n)$ exhibits representation stability in the sense of \cite{CEF-FI}. Further suppose that $\Ho^*(\widetilde{X}_\bullet)$ is convergent in the sense of \cite[Definition 3.12]{CEF-pointcounts}. Then for every sequence of class functions $(\chi_n)_{n\in \N}$, given uniformly by a character polynomial, the following equality holds
\begin{equation}
\lim_{n\rightarrow \infty} q^{-\dim(\widetilde{X}_n)}\sum_{x\in X_n/S_n} \chi_n(\sigma_x) = \sum_{i\geq 0}\sum_{\lambda} \frac{(-1)^i}{\lambda} \langle \Ho^i(\widetilde{X}_\infty)_{\lambda},\chi_\infty\rangle.
\end{equation}
In particular, the limit on the left exists, and its value is given by the value of the convergent sum on the right.
\end{cor}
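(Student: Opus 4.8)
The plan is to combine Theorem~\ref{thm:intro_main} with the hypothesized stabilization of the inner products, managing the interchange of the limit with the infinite sum over cohomological degrees and eigenvalues. Since each $\widetilde{X}_n$ is smooth, I would start from the Poincar\'{e}-dual form of the trace formula, equation \eqref{eq:intro_G-L-trace-formula}, applied to $\chi = \chi_n$ and the variety $\widetilde{X}_n$. This yields, for each fixed $n$,
\begin{equation*}
q^{-\dim(\widetilde{X}_n)}\sum_{x\in X_n(\F_q)} \chi_n(\sigma_x) = \sum_{\lambda}\sum_{i\geq 0} \frac{(-1)^i}{\lambda}\,\langle \Ho^i(\widetilde{X}_n)_\lambda, \chi_n\rangle_{S_n},
\end{equation*}
which is a finite sum for each $n$ because only finitely many cohomology groups are nonzero in each fixed degree. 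The goal is then to pass to the limit $n\to\infty$ on both sides.

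For the right-hand side, the representation stability hypothesis, applied within each generalized eigenspace of $\Fr_q$ (legitimate by the Noetherianity of $\FI$ as noted in the excerpt), guarantees that each individual term $\langle \Ho^i(\widetilde{X}_n)_\lambda, \chi_n\rangle_{S_n}$ becomes independent of $n$ for $n\gg 1$, with stable value $\langle \Ho^i(\widetilde{X}_\infty)_\lambda, \chi_\infty\rangle$. The subtlety is that for each fixed $n$ the sum over $i$ and $\lambda$ is finite, but the range of nonzero terms grows with $n$, so termwise stabilization alone does not yield convergence of the doubly-infinite sum. This is exactly where the convergence hypothesis of \cite[Definition 3.12]{CEF-pointcounts} enters: I would invoke it to obtain a summable dominating bound on the terms $\langle \Ho^i(\widetilde{X}_n)_\lambda, \chi_n\rangle_{S_n}$, uniform in $n$, so that a dominated-convergence argument lets me interchange $\lim_{n\to\infty}$ with $\sum_{i,\lambda}$. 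The output of this step is precisely the convergent double sum appearing on the right-hand side of the corollary.

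The main obstacle, as just indicated, is the justification of this limit–sum interchange: one must verify that convergence in the sense of \cite[Definition 3.12]{CEF-pointcounts} supplies the required uniform integrability/tail control, and that the factor $q^{-\dim(\widetilde{X}_n)}$ combined with the eigenvalue weights $\lambda^{-1}$ does not disrupt summability. Concretely, one uses the Weil bound $|\lambda| = q^{w/2}$ on the weights together with the polynomial-in-$n$ control on the Betti numbers supplied by representation stability and convergence, ensuring that the contributions of high-degree cohomology are absorbed by negative powers of $q$. Once the interchange is justified, the left-hand limit exists \emph{because} the right-hand side converges: the equality of the two finite sums at each $n$ forces the normalized orbit-statistic $q^{-\dim(\widetilde{X}_n)}\sum_x \chi_n(\sigma_x)$ to converge to the stable double sum. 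I would close by remarking that all inner products are taken with respect to the stable character $\chi_\infty$, which is well-defined since a character polynomial specifies a compatible sequence $(\chi_n)$ in the sense required by \cite{CEF-FI}.
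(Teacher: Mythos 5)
Your proposal is correct and follows precisely the route the paper intends: the corollary is stated without an explicit proof because it is exactly the combination you describe --- the Poincar\'{e}-dual trace formula \eqref{eq:intro_G-L-trace-formula} applied for each $n$, termwise stabilization of $\langle \Ho^i(\widetilde{X}_n)_\lambda,\chi_n\rangle$ within each Frobenius eigenspace via representation stability and Noetherianity of $\FI$, and the convergence hypothesis of \cite[Definition 3.12]{CEF-pointcounts} supplying the uniform summable majorant that justifies interchanging $\lim_{n\to\infty}$ with $\sum_{i,\lambda}$, just as in the unramified argument of \cite{CEF-pointcounts}. The only slight imprecision is your statement of the Weil bound as an equality $|\lambda|=q^{w/2}$: for smooth but possibly non-proper $\widetilde{X}_n$ the correct statement is the inequality $|\lambda|\geq q^{i/2}$ for eigenvalues on $\Ho^i$ (dual to Deligne's bound $|\lambda|\leq q^{i/2}$ on $\Ho^i_c$), which is what makes the factors $\lambda^{-1}$ decay and the dominated-convergence step go through.
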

\begin{remark}[\textbf{Other sequences of groups}]
In \cite{Ga-FI} the author introduces \emph{categories of $\FI$-type}, to which the theory of representation stability can be extended. In particular, Corollary \ref{cor:intro_repstability} holds more generally for any diagram $\widetilde{X}_\bullet$ of varieties for which $\Ho^*(\widetilde{X}_\bullet)$ exhibits representation stability and has a subexponential bound on the growth of certain invariants.

For example, in \cite{Ga-arrangements} the author demonstrated that many collections of complements of linear subspace arrangements indeed give rise to cohomology groups which exhibit representation stability. Therefore, if such a collection satisfies an additional subexponential growth condition as above, then an analog of Corollary \ref{cor:intro_repstability} holds.
\end{remark}

\subsection{Example: Spaces of polynomials and Young cosets}\label{subsec:intro-polynomials}
We conclude in \S\ref{sec:polys} by fully working out the example of the space of polynomials $\Poly^d = A^d/S_d$ mentioned in the first paragraph. In this case, as introduced in the beginning of \ref{subsec:intro-trace}, the permutation $\sigma_f$ that the Frobenius induces on the roots of a polynomial $f\in \Poly^d(\F_q)$ records the irreducible decomposition of $f$. Thus Theorem \ref{thm:intro_main} is concerned with the fundamental question of factorization statistics of polynomials over $\F_q$.

On the one hand the variety $\widetilde{X}$ in this case is $\A^d$ and has very simple cohomology. On the other hand, the quotient map $p:\A^d\mor{} \Poly^d$ is highly ramified and the associated permutations $\sigma_f$ are very far from defining conjugacy classes of $S_d$. Thus \S\ref{sec:polys} deals mainly with the combinatorial challenge of evaluating $\chi(\sigma_f)$ when $f$ is a ramified point, i.e. computing averages of $\chi$ over cosets of Young subgroups of $S_d$. The same calculation is useful in many other contexts when the symmetric group acts by permutations.

Applying Theorem \ref{thm:intro_main} to this case produces the following apparent coincidence (which will become obvious once the two sides of Equation \ref{eq:equal-expectation} are evaluated):
\begin{cor}[\textbf{Equal expectations}]\label{cor:intro_equal_expectations}
Endow the two finite sets $S_d$ and $\Poly^d(\F_q)$ with uniform probability measures. Then every $S_d$-class function $\chi$ simultaneously defines a random variable on both spaces (for a point $f\in \Poly^d(\F_q)$ define $\chi(f):=\chi(\sigma_f)$ as in Definition \ref{def:intro_evaluate_sigmax}), and for every such $\chi$
\begin{equation}\label{eq:equal-expectation}
\mathbb{E}_{\Poly^d(\F_q)}\left[\chi\right] = \mathbb{E}_{S_d}\left[\chi\right].
\end{equation}
\end{cor}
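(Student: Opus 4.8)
The plan is to deduce the identity directly from Theorem~\ref{thm:intro_main} applied to the tautological setup $\widetilde{X}=\A^d$, $G=S_d$, $X=\Poly^d$, where the covering map $p\colon\A^d\to\Poly^d$ sends a tuple of roots to the monic polynomial having them; the content of the corollary is then simply that the cohomology of affine space is as trivial as possible. First I would record the two elementary inputs. On the base side, a monic degree $d$ polynomial is determined by its $d$ coefficients below the leading term, so $|\Poly^d(\F_q)|=q^d$, and hence by definition $\sum_{f\in\Poly^d(\F_q)}\chi(\sigma_f)=q^d\,\mathbb{E}_{\Poly^d(\F_q)}[\chi]$. On the cohomological side, affine space is cohomologically a point: the only nonvanishing group is $\Ho^0(\A^d_{\overline{\F}_q};\Q_\ell)=\Q_\ell$, on which $\Fr_q$ acts trivially (eigenvalue $\lambda=1$) and on which $S_d$ acts trivially, since it is spanned by the constant function $1$ and every automorphism fixes constants.

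With these in hand I would invoke the smooth form of the trace formula, Equation~\eqref{eq:intro_G-L-trace-formula}, with $\dim(\widetilde{X})=d$. Every summand with $i>0$ or $\lambda\neq 1$ vanishes, and the single surviving term is the one with $i=0$, $\lambda=1$, in which $\Ho^0(\A^d)_1$ is the trivial representation $\mathbf{1}$ of $S_d$. The right-hand side therefore collapses to
\[
q^{d}\cdot 1^{-1}\cdot(-1)^0\,\langle\mathbf{1},\chi\rangle_{S_d}=q^{d}\,\langle\mathbf{1},\chi\rangle_{S_d}.
\]
Since the inner product of the trivial character with $\chi$ is exactly the group average $\langle\mathbf{1},\chi\rangle_{S_d}=\tfrac{1}{|S_d|}\sum_{g\in S_d}\chi(g)=\mathbb{E}_{S_d}[\chi]$, comparing with the left-hand side gives $q^d\,\mathbb{E}_{\Poly^d(\F_q)}[\chi]=q^d\,\mathbb{E}_{S_d}[\chi]$, and cancelling the common factor $q^d$ yields Equation~\eqref{eq:equal-expectation}.

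I do not expect a genuine obstacle: once Theorem~\ref{thm:intro_main} is in place, the whole statement reduces to the observation that $\Ho^*(\A^d)$ contributes only the trivial representation in the single eigenvalue $\lambda=1$, which is precisely why the $q$-dependence one might a priori expect on the polynomial side disappears and the answer is forced to equal the intrinsic symmetric-group average. The only points warranting a careful sentence are the triviality of the $S_d$-action on $\Ho^0$ and of the Frobenius eigenvalue; a reader preferring the compactly supported form~\eqref{eq:intro_G-L-trace-formula_cc} would instead use $\Ho^{2d}_c(\A^d)\cong\Q_\ell(-d)$ with eigenvalue $q^d$ and trivial $S_d$-action, and would arrive at the same conclusion. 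As an independent check — and the sense in which the result is an ``apparent coincidence'' — one could bypass cohomology entirely and verify~\eqref{eq:equal-expectation} by evaluating both expectations explicitly through the Young-coset averages computed in \S\ref{sec:polys}.
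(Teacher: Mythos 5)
Your proof is correct and takes essentially the same route as the paper: apply the smooth (Poincar\'e-dual) form of Theorem~\ref{thm:intro_main} to $\widetilde{X}=\A^d$, use that $\Ho^*(\A^d)$ is the trivial $S_d$-representation $\Q_\ell(0)$ concentrated in degree $0$ with Frobenius eigenvalue $1$, and identify $\langle \mathbf{1},\chi\rangle_{S_d}$ with $\mathbb{E}_{S_d}[\chi]$ while $|\Poly^d(\F_q)|=q^d$ converts the point-count sum into $q^d\,\mathbb{E}_{\Poly^d(\F_q)}[\chi]$. The paper's proof is precisely this one-line computation, so there is nothing further to reconcile.
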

To understand the left hand side of this equation, one has to interpret the value of $\chi(f)$ for every $f\in \Poly^d(\F_q)$. The explicit description of this value requires some notation, and the complete answer is given in Theorem \ref{thm:intro_evaluating_xmu} on the next page. For the necessary notation -- since $\chi(f)$ is related to the divisors $f$, it will be most convenient to describe its value using the following natural structure of \emph{division symbols}.

For every polynomial $g\in \F_q[t]$ define a function $\epsilon_g:\F_q[t]\mor{}\{0,1\}$ by
\begin{equation}\label{eq:epsilon_evaluation}
\epsilon_g(f) = \begin{cases}
1 \quad \text{if } g|f\\ 0 \quad \text{otherwise}.
\end{cases}
\end{equation}
Further define formal multiplication on the symbols $\epsilon_g$ by
\begin{equation}\label{eq:epsilon_multiplication}
\epsilon_g \epsilon_{g'} = \epsilon_{gg'}.
\end{equation}
Note that this multiplication does not commute with the evaluation on a polynomial $f$, and in fact every $\epsilon_g$ evaluates on $f$ nilpotently.

To evaluate $\chi(f)$ for every $\chi$, it suffices to consider a spanning set of class functions. The most convenient in this context are given by character polynomials (see \cite{CEF-pointcounts}) which are described in \S\ref{subsec:evaluating_chi} below. Briefly, for every $k$ let $X_k$ by the class function
$$
X_k(\sigma) = \# \text{ of $k$-cycles in }\sigma
$$
and for every multi-index $\mu=(\mu_1,\mu_2,\ldots)$ define
$$
\binom{X}{\mu} = \binom{X_1}{\mu_1}\binom{X_2}{\mu_2}\ldots.
$$
Explicitly, $\binom{X}{\mu}$ is the $S_d$-class function that counts the number of ways to choose $\mu_k$ many $k$-cycles in a permutation $\sigma\in S_d$. Instead of evaluating each $\binom{X}{\mu}$ separately, it is possible evaluate them all simultaneously be means of a generating function.
\begin{definition}
Introduce indeterminants $t_1,t_2,\ldots$, and define a generating function
$$
F(\textbf{t}) = \sum_{\mu = (\mu_1,\mu_2,\ldots)}\binom{X}{\mu}t_1^{\mu_1}t_2^{\mu_2}\ldots = (1+t_1)^{X_1}(1+t_2)^{X_2}\ldots
$$
\end{definition}
Evaluating $\binom{X}{\mu}(f)$ for every $\mu$ is the same as evaluating $F(\textbf{t})$ on $f$.
\begin{thm}[\textbf{Evaluation of $\binom{X}{\mu}$}]\label{thm:intro_evaluating_xmu}
When evaluating $f\in \Poly^d(\F_q)$ on class functions, there is an equality of generating functions
\begin{equation}\label{eq:intro_generating_function}
F(\textbf{t}) = \exp\left( \sum_{k=1}^\infty \sum_{p\in \Irr_{|k}}\deg(p)\epsilon_{p}^{\frac{k}{\deg(p)}} \, \frac{t_k}{k} \right)
\end{equation}
where $\Irr_{|k}$ is the set of irreducible polynomials over $\F_q$ whose degree divides $k$.

Unpacking Equation \ref{eq:intro_generating_function}, for every multi-index $\mu=(\mu_1,\mu_2,\ldots)$ there is an equality
\begin{equation} \label{eq:evaluation_on_xmu}
\binom{X}{\mu}(f) = \prod_{k=1}^\infty \frac{1}{k^{\mu_k}\mu_k!} \left( \sum_{p\in \Irr_{|k}} \deg(p)\epsilon_p^{\frac{k}{\deg(p)}} \right)^{\mu_k} (f)
\end{equation}
where the evaluation of the right-hand side proceeds by first expanding into monomial terms in the $\epsilon_g$ symbols using Equation \ref{eq:epsilon_multiplication}, then evaluating according to Equation \ref{eq:epsilon_evaluation}.
\end{thm}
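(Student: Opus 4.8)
The two displayed equations are equivalent: Equation \ref{eq:evaluation_on_xmu} is obtained from \ref{eq:intro_generating_function} by reading off the coefficient of $\prod_k t_k^{\mu_k}$, using $\exp\big(\sum_k A_k t_k/k\big) = \prod_k \sum_{\mu_k \ge 0} \frac{A_k^{\mu_k}}{k^{\mu_k}\mu_k!} t_k^{\mu_k}$ with $A_k = \sum_{p \in \Irr_{|k}} \deg(p)\,\epsilon_p^{k/\deg(p)}$, so I focus on the generating-function form \ref{eq:intro_generating_function}. The plan is to read it as an identity of values: for each fixed $f \in \Poly^d(\F_q)$ both sides are elements of $\overline{\Q}_\ell[[t_1, t_2, \ldots]]$ -- the left via the averaging of Definition \ref{def:intro_evaluate_sigmax}, the right via the expand-then-evaluate rule of Equations \ref{eq:epsilon_evaluation}--\ref{eq:epsilon_multiplication} -- and to show they agree. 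Writing $f = \prod_{i=1}^r p_i^{e_i}$ with the $p_i$ distinct irreducibles of degree $m_i = \deg(p_i)$, a lift $\widetilde{x} \in \A^d$ is the tuple of roots of $f$ in which each of the $m_i$ roots of $p_i$ occurs with multiplicity $e_i$; hence the stabilizer is the Young subgroup $H_{\widetilde{x}} = \prod_{i=1}^r (S_{e_i})^{m_i}$ and $F(\mathbf{t})(\sigma_f) = \frac{1}{|H_{\widetilde{x}}|}\sum_{h \in H_{\widetilde{x}}} \prod_k (1+t_k)^{X_k(g_f h)}$. Since $\Fr_q$ preserves each Galois orbit of roots, $g_f$ and $H_{\widetilde{x}}$ are block-diagonal across the factors $p_i$, and $X_k$ of a block-diagonal permutation is the sum of the block contributions.

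First I would show that both sides are multiplicative over the prime-power factorization of $f$. On the left this is immediate from the block structure: the coset $g_f H_{\widetilde{x}}$ is a product $\prod_i\big(g_f^{(i)} H^{(i)}\big)$ with $H^{(i)} = (S_{e_i})^{m_i}$, the averaging is over a product of uniform measures, and $\prod_k (1+t_k)^{X_k}$ factors over the blocks, giving $F(\mathbf{t})(\sigma_f) = \prod_i F(\mathbf{t})(\sigma_{p_i^{e_i}})$. On the right, $\exp$ turns the exponent's sum into a product, and the decisive point is that the $\epsilon$-evaluation is local: a monomial $\prod_j \epsilon_{q_j}^{a_j}$ evaluates to $1$ on $f$ precisely when $\prod_j q_j^{a_j} \mid f$, so every monomial involving a prime not dividing $f$ dies and the surviving contribution factors as a product of pure $p_i$-parts. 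It therefore suffices to treat a single prime power $f = p^e$, and I set $m = \deg(p)$.

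For $f = p^e$ I would invoke the wreath-product description of the coset. Writing $h = (h_1, \ldots, h_m) \in (S_e)^m$, the element $g_f h$ is an $m$-cycle on the $m$ root-blocks twisted by the within-block permutations $h_j$; by the standard cycle computation for such elements, each $\ell$-cycle of the product $\pi = h_1 h_2 \cdots h_m \in S_e$ produces exactly one $(m\ell)$-cycle of $g_f h$, so $X_k(g_f h) = X_{k/m}(\pi)$ when $m \mid k$ and $0$ otherwise. Crucially, as $h$ ranges uniformly over $(S_e)^m$ the product $\pi$ is uniform in $S_e$ (fix $h_2,\ldots,h_m$ and vary $h_1$). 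Hence, setting $y_\ell = t_{m\ell}$ and using that the number of $\pi \in S_e$ with $a_\ell$ cycles of length $\ell$ equals $e!/\prod_\ell \ell^{a_\ell} a_\ell!$, the left-hand side becomes $\mathbb{E}_{\pi \in S_e}\big[\prod_\ell (1+y_\ell)^{X_\ell(\pi)}\big] = \sum_{(a_\ell):\, \sum_\ell \ell a_\ell = e} \prod_\ell \frac{(1+y_\ell)^{a_\ell}}{\ell^{a_\ell} a_\ell!}$. On the right, only powers of $\epsilon_p$ survive on $p^e$, and since $\epsilon_p^{\,j}(p^e) = 1$ exactly for $j \le e$ (the nilpotency of $\epsilon_p$), expanding $\exp\big(\sum_\ell \epsilon_p^\ell\, y_\ell/\ell\big)$ and evaluating gives $\sum_{(j_\ell):\, \sum_\ell \ell j_\ell \le e} \prod_\ell \frac{y_\ell^{j_\ell}}{\ell^{j_\ell} j_\ell!}$.

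It remains to prove the combinatorial identity
\[
\sum_{(a_\ell):\, \sum_\ell \ell a_\ell = e} \prod_\ell \frac{(1+y_\ell)^{a_\ell}}{\ell^{a_\ell} a_\ell!}
=
\sum_{(j_\ell):\, \sum_\ell \ell j_\ell \le e} \prod_\ell \frac{y_\ell^{j_\ell}}{\ell^{j_\ell} j_\ell!},
\]
which I would do by comparing the coefficient of a fixed monomial $\prod_\ell y_\ell^{j_\ell}$. Expanding $(1+y_\ell)^{a_\ell}$ and substituting $b_\ell = a_\ell - j_\ell \ge 0$, the left coefficient factors as $\big(\prod_\ell \tfrac{1}{\ell^{j_\ell} j_\ell!}\big)\cdot \sum_{(b_\ell):\, \sum_\ell \ell b_\ell = e'} \prod_\ell \tfrac{1}{\ell^{b_\ell} b_\ell!}$ with $e' = e - \sum_\ell \ell j_\ell$, and the trailing sum equals $1$ whenever $e' \ge 0$ (it is $\frac{1}{e'!}\sum_{\pi \in S_{e'}} 1 = 1$, since $e'!/\prod_\ell \ell^{b_\ell} b_\ell!$ counts the permutations of cycle type $(b_\ell)$); for $e' < 0$ there is no admissible $(a_\ell)$, so the coefficient vanishes. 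This reproduces exactly the right-hand coefficient, completing the prime-power case and hence the theorem. I expect the main obstacle to lie in the first two steps -- pinning down the coset $g_f H_{\widetilde{x}}$ as a product of wreath-product cosets and checking that the nilpotent $\epsilon$-calculus reproduces precisely the ``total degree $\le e$'' truncation -- rather than in the closing identity, which is a one-line coefficient comparison once the uniformity of $\pi$ and the locality of the $\epsilon$-evaluation are established.
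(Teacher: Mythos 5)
Your proposal is correct, and its skeleton matches the paper's: both arguments identify $\chi(f)$ as an average over a coset $\tau H_f$ of the Young subgroup $\prod_i (S_{e_i})^{m_i}$, both rest on the wreath-product cycle computation (a cycle of length $\ell$ in the product of the block twists yields one cycle of length $m\ell$ in the twisted block-rotation, cf.\ Claim \ref{claim:X_tau}), both use that the multiplication map $(S_e)^m \to S_e$ pushes the uniform measure to the uniform measure (the paper's measure-preserving claim), and both match the two sides via the nilpotency truncation $\epsilon_p^j(p^e)=1$ iff $j\le e$. The differences are organizational but worth recording. First, you factor the identity over the prime-power decomposition of $f$ at the outset, using block-diagonality of the coset on the left and coprimality-locality of the $\epsilon$-evaluation on the right; the paper instead treats all primes simultaneously, via the multi-projection $m=(m_1,\ldots,m_n):H_f\to \prod_i S_{r_i}$ and a ring with $n$ formal nilpotents $\epsilon_1,\ldots,\epsilon_n$, converting these into the division symbols $\epsilon_{p_i}$ only at the end by the same coprimality observation you invoke. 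Second, and more substantively, the paper imports the expectation identity $\mathbb{E}_{S_r}\left[\binom{X}{\mu}\right]=\prod_\ell \frac{1}{\ell^{\mu_\ell}\mu_\ell!}$ for $\|\mu\|\le r$ from \cite{CEF-pointcounts} (Fact \ref{fact:expectation}), whereas your closing coefficient comparison proves it from scratch, using the cycle-type count $e!/\prod_\ell \ell^{a_\ell}a_\ell!$ and the class-equation identity $\sum_{\sum_\ell \ell b_\ell = e'}\prod_\ell \frac{1}{\ell^{b_\ell}b_\ell!}=1$. So your version is fully self-contained where the paper's is not; what the paper's all-primes-at-once formalism buys in exchange is the intermediate Equation \ref{eq:final_form_for_young_cosets}, stated purely in terms of the abstract nilpotents, which it then reuses verbatim to prove the combinatorial statistics on Young cosets (Theorem \ref{thm:average_young}) with no reference to polynomials.
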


With Theorem \ref{thm:intro_evaluating_xmu} it is possible explain the coincidence of the two expectations in Corollary \ref{cor:intro_equal_expectations}:
\begin{prop}[\textbf{Necklace relations}]\label{prop:intro_necklace}
The equalities $\displaystyle{\mathbb{E}_{\Poly^d(\F_q)}\left[\chi\right] = \mathbb{E}_{S_d}\left[\chi\right]
}$ for every $\chi$ are equivalent to the Necklace relations
$$
\sum_{d|k}d N_d = q^k
$$
where $N_d$ is the number of monic irreducible polynomials of degree $d$ over $\F_q$.
\end{prop}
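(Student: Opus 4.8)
The plan is to verify the identity $\mathbb{E}_{\Poly^d(\F_q)}[\chi] = \mathbb{E}_{S_d}[\chi]$ simultaneously for all $\chi$ by computing two generating functions and reading off the Necklace relations as exactly the condition under which they agree. Since the class functions $\binom{X}{\mu}$ span the class functions on $S_d$, it suffices to treat $\chi = \binom{X}{\mu}$ for every $\mu$, equivalently to show that the full generating function $F(\textbf{t})$ has equal expectation on both spaces. I will package the expectations over all degrees into a single formal variable $z$ and compare the resulting power series in $z$ and $\textbf{t}$ coefficientwise.

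For the symmetric-group side, the cycle-index (exponential) formula for $S_d$, namely $\sum_{\sigma\in S_d}\prod_k u_k^{X_k(\sigma)} = d!\,[z^d]\exp(\sum_k u_k z^k/k)$, specializes at $u_k = 1+t_k$ to
\begin{equation*}
\sum_{d\ge 0} z^d\, \mathbb{E}_{S_d}\!\left[F(\textbf{t})\right] = \frac{1}{1-z}\exp\!\left(\sum_{k\ge 1} t_k \frac{z^k}{k}\right).
\end{equation*}

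For the polynomial side I would start from Theorem \ref{thm:intro_evaluating_xmu} and reorganize the exponent by irreducibles: grouping the terms of $\sum_k\sum_{p\in\Irr_{|k}}\deg(p)\epsilon_p^{k/\deg p}t_k/k$ according to the irreducible $p$ (writing $e=\deg p$ and $k=em$) rewrites $F(\textbf{t})$ as a product over irreducibles of local exponentials $\exp(\sum_{m\ge 1}\epsilon_p^m\, t_{em}/m)$. Using the elementary count $\mathbb{E}_{\Poly^d(\F_q)}[\epsilon_g]=q^{-\deg g}$ together with unique factorization, the unnormalized sum $\sum_{f}z^{\deg f}F(\textbf{t})(f)$ factors as an Euler product over irreducibles. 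The key local computation — and the step I expect to be the main obstacle — is evaluating each local factor: one must pass carefully through the ``expand-then-evaluate'' semantics of the symbols $\epsilon_p$ (so that $\epsilon_p^N(f)=1$ iff $p^N\mid f$), and then summing the resulting partial exponential sums over the multiplicity $p^a\| f$ should collapse to
\begin{equation*}
L_p = \frac{1}{1-z^e}\exp\!\left(\sum_{m\ge 1} z^{em}\frac{t_{em}}{m}\right), \qquad e=\deg p.
\end{equation*}
Taking the product over all irreducibles, collecting by $k=em$, and applying the rescaling $z\mapsto z/q$ forced by $\mathbb{E}_{\Poly^d}=q^{-d}\sum_f$ should yield
\begin{equation*}
\sum_{d\ge 0} z^d\,\mathbb{E}_{\Poly^d(\F_q)}\!\left[F(\textbf{t})\right] = \left(\prod_{e\ge 1}(1-(z/q)^e)^{-N_e}\right)\exp\!\left(\sum_{k\ge 1}\frac{t_k}{k}\Big(\sum_{e\mid k} eN_e\Big)\frac{z^k}{q^k}\right).
\end{equation*}

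Finally I would compare the two series. The $\textbf{t}$-free prefactors agree iff $\prod_e(1-(z/q)^e)^{-N_e}=(1-z)^{-1}$, and the exponents agree iff $q^{-k}\sum_{e\mid k}eN_e=1$ for every $k$; both conditions are precisely the Necklace relations $\sum_{e\mid k}eN_e=q^k$ (the first being their zeta-function/Euler-product form via unique factorization, the second their literal statement). Hence the two generating functions coincide — equivalently $\mathbb{E}_{\Poly^d(\F_q)}[\chi]=\mathbb{E}_{S_d}[\chi]$ for all $\chi$ — if and only if the Necklace relations hold, which is the claim.
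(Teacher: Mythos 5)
Your proof is correct, and it takes a genuinely different route from the paper's. The paper fixes the degree $N$ and computes $\mathbb{E}_{\Poly^N(\F_q)}[F(\textbf{t})]$ directly via Lemma \ref{lem:average_of_epsilon}: the expectation factors as $\phi_\epsilon\circ\Lambda$, where $\Lambda:R_q\to\Q[\epsilon]/(\epsilon^{N+1})$ is the algebra homomorphism $\epsilon_g\mapsto(\epsilon/q)^{\deg g}$; since $\Lambda$ commutes with the exponential of Theorem \ref{thm:intro_evaluating_xmu}, this yields the exact identity $\mathbb{E}_{\Poly^N(\F_q)}\bigl[\binom{X}{\mu}\bigr]=\mathbb{E}_{S_N}\bigl[\binom{X}{\mu}\bigr]\prod_{k}\bigl(q^{-k}\sum_{d|k}dN_d\bigr)^{\mu_k}$, from which the equivalence with the Necklace relations is immediate, and which moreover quantifies exactly how the two expectations differ when the relations are not invoked. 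You instead sum over all degrees at once against a degree-tracking variable $z$: the symmetric-group side is handled by the cycle index formula (equivalent in content to Fact \ref{fact:expectation}, which the paper uses instead), and the polynomial side by an Euler product over irreducibles. Your key local computation, the one you flagged as the main obstacle, does go through: expanding the local exponential as $\sum_\nu \epsilon_p^{\|\nu\|}\prod_m t_{em}^{\nu_m}/(m^{\nu_m}\nu_m!)$ and using that $\epsilon_p^{\|\nu\|}$ evaluates to $1$ on $f$ exactly when $\|\nu\|\le a$, where $a$ is the multiplicity of $p$ in $f$, the sum over $a$ collapses by the geometric series $\sum_{a\ge\|\nu\|}z^{ea}=z^{e\|\nu\|}/(1-z^e)$, giving precisely your $L_p$ (note this needs only unique factorization, multiplicativity of evaluation on coprime symbols, and $|\Poly^d(\F_q)|=q^d$, not the full strength of Lemma \ref{lem:average_of_epsilon}). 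Your final comparison is also sound: setting $\textbf{t}=0$ forces the $\textbf{t}$-free prefactors to agree, then dividing by the invertible prefactor forces the exponents to agree, and each of the two conditions is, after taking logarithms in the first case, precisely the Necklace relations. The tradeoff between the two arguments: the paper's nilpotent-symbol calculus stays at fixed $N$ and produces a per-$(N,\mu)$ correction formula, reusing the machinery already built for Theorem \ref{thm:intro_evaluating_xmu}; your zeta-function argument trades that quantitative refinement for structural clarity, making the role of unique factorization in $\F_q[t]$ transparent and replacing the nilpotence bookkeeping by the variable $z$.
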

Proposition \ref{prop:intro_necklace} shows that in fact Corollary \ref{cor:intro_equal_expectations} says nothing new about polynomials. However, the explicit evaluation of $\chi(f)$ in Theorem \ref{thm:intro_evaluating_xmu} contains much more information: one can impose any $S_d$-invariant restriction on the roots of polynomials (see examples in \S\ref{sec:polys}) and get an equality similar to Corollary \ref{cor:intro_equal_expectations} -- relating the factorization statistics of those polynomials that satisfy the restriction with various expectations calculated over $S_n$.

\begin{remark}[\textbf{Statistics on cosets of Young subgroups}]
The calculation involved in the evaluation $\chi(f)$ is entirely combinatorial, and can be considered independently from polynomial counting problems. In \S\ref{subsec:young_cosets} we phrase this as an independent combinatorial result, which might be of interest in other contexts. Consider the following: let $H_{\lambda}\leq S_d$ be a Young subgroup and let $gH_{\lambda}$ be a coset with $g\in N(H_{\lambda})$.
\begin{question}
What is the distribution of cycle types of permutations in $gH$?
\end{question}
Theorem \ref{thm:average_young} below answers the question and provides additional statistics on $gH_{\lambda}$.
\end{remark}

\subsection{Acknowledgments} I would like to thank 
Benson Farb for his invaluable advice and comments throughout the writing process of this document. I further thank Jesse Wolfson for his time and help shaping this paper into its current form. I thank Vladimir Drinfeld and Melanie Wood for helping me understand how this project fits in the context of existing methods. Lastly, I would like to Jeffrey Lagarias for his many helpful comments that make the paper more readable.

\section{The Frobenius distribution trace formula}
Let $G$ be a finite group. As described above, one can extend the domain of $G$-class functions to equivariant permutations of $G$-orbits by averaging.
\begin{definition}[\textbf{Evaluating class functions on permutations}]\label{def:evaluation-on-permutation}
Let $S$ be a transitive $G$-set (possibly with non-trivial stabilizers) and let $k$ be a field of characteristic $0$. For every $G$-equivariant function $\sigma:S\mor{}S$ and every class function $\chi:G\mor{}k$ define
\begin{equation}
\chi(\sigma) := \frac{1}{|\Stab(s)|}\sum_{\substack{g\in G \\ g.s =\sigma(s)}} \chi(g)
\end{equation}
where $s\in S$ is any element and $\Stab(s)$ is its stabilizer subgroup.
\end{definition}
Note that because $\chi$ is a class function, this definition does not depend on the choice of $s$, as any other choice reduces to conjugating all elements in the sum.

For the proof of Theorem \ref{thm:intro_main} we recall the following definitions.
\begin{definition}[\textbf{$G$-equivariant sheaf}]
A $G$-action on a sheaf $\mathcal{F}$ over a $G$-space $\widetilde{X}$ is a collections of sheaf morphisms $\varphi_g : (g^{-1})^*\mathcal{F} \rightarrow \mathcal{F}$ indexed by $G$ that make the following diagrams commute for every $g,h\in G$:
$$
\xymatrix{
(g^{-1})^*(h^{-1})^*\mathcal{F} \ar[r]^{\sim} \ar[d]_{(g^{-1})^* \varphi_h} &
 (h^{-1}g^{-1})^* \mathcal{F} \ar[d]^{\varphi_{gh}} \\
  (g^{-1})^*\mathcal{F} \ar[r]_{\varphi_g} & 
\mathcal{F}
}
$$
\end{definition}
\begin{example}[\textbf{Constant sheaf with a $G$-action}] \label{ex:constant-sheaf-action}
Given a $G$-action on an abelian group $A$, construct a $G$-action on the constant sheaf $\overline{A}$ over a $G$-space $\widetilde{X}$ by defining $(g^{-1})^*\overline{A}\mor{\varphi_g}\overline{A}$ to act by $g$ on all stalks, which are canonically isomorphic to $A$.
\end{example}

Now suppose $p:\widetilde{X}\mor{}X$ is the ramified $G$-cover discussed in Theorem \ref{thm:intro_main} and let $\mathcal{F}$ be a sheaf on $\widetilde{X}$ equipped with a $G$-action. Then the push-forward $p_*\mathcal{F}$ on $X$ acquires the $G$-action
$$
p_*\mathcal{F} = (p\circ g)_*\mathcal{F} \mor{\simeq} p_* g_* \mathcal{F} \mor{\simeq} p_* (g^{-1})^* \mathcal{F} \xlongrightarrow{p_* \varphi_g} p_*\mathcal{F}
$$
which is now acting by sheaf automorphisms.

\begin{definition}[\textbf{Twisted coefficient sheaf}]
In the situation described in the previous paragraph, define \textit{the twisted coefficient sheaf} corresponding to the $G$-sheaf $\mathcal{F}$ on $\widetilde{X}$ to be the subsheaf of invariants $(p_*\mathcal{F})^G$ on $X$. We denote this sheaf by $\mathcal{F}/G$ (corresponding to $X = \widetilde{X}/G$).
\end{definition}
This construction is the sheaf analog of the Borel construction in topology: giving rise to a (flat) twisted fiber bundle from the data of a $\pi_1$-action on the fiber.
%
%

\begin{lem}[\textbf{Transfer for $\mathcal{F}/G$}]\label{lem:transfer}
If multiplication by $|G|$ is an invertible transformation on $\mathcal{F}$, there is an isomorphism
$$
\Ho^i_{c,\acute{e}t}(X_{\overline{\F}_q}; \mathcal{F}/G) \cong \Ho^i_{c,\acute{e}t}(\widetilde{X}_{\overline{\F}_q}; \mathcal{F})^G.
$$

Furthermore, since the $G$-action is Galois-equivariant, so is this isomorphism.
\end{lem}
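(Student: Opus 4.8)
The plan is to factor the asserted isomorphism through the intermediate sheaf $p_*\mathcal{F}$ on $X$, exploiting two features of the situation: that $p$ is a finite morphism, so that $p_*$ is exact and agrees with $p_!$; and that $|G|$ is invertible on $\mathcal{F}$, so that passage to $G$-invariants is realized by an idempotent and is therefore exact. Concretely I would establish the chain
$$
\Ho^i_{c}(X; \mathcal{F}/G) = \Ho^i_{c}(X;(p_*\mathcal{F})^G) \cong \Ho^i_{c}(X;p_*\mathcal{F})^G \cong \Ho^i_{c}(\widetilde{X};\mathcal{F})^G
$$
and then check that every arrow is compatible with both the $G$-action and the Frobenius.

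First I would treat the rightmost isomorphism. Since $p$ is finite it is proper, so $p_! = p_*$, and since its fibers are $0$-dimensional the higher direct images $R^q p_!\,\mathcal{F}$ vanish for $q>0$. The Leray spectral sequence for compactly supported cohomology $\Ho^s_{c}(X;R^q p_!\,\mathcal{F}) \Rightarrow \Ho^{s+q}_{c}(\widetilde{X};\mathcal{F})$ therefore degenerates and yields a natural isomorphism $\Ho^i_{c}(X; p_*\mathcal{F}) \cong \Ho^i_{c}(\widetilde{X};\mathcal{F})$. The delicate point --- and the step I expect to require the most care --- is to verify that the $G$-action on $p_*\mathcal{F}$ constructed above (through the chain $p_*\mathcal{F} = (p\circ g)_*\mathcal{F} \cong p_* g_* \mathcal{F} \cong p_*(g^{-1})^*\mathcal{F} \xrightarrow{p_*\varphi_g} p_*\mathcal{F}$) corresponds under this isomorphism to the intrinsic $G$-action on $\Ho^i_{c}(\widetilde{X};\mathcal{F})$ coming from the equivariant structure $\varphi_g$. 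This is a matter of unwinding the functoriality of $p_!$ along the automorphisms $g$ together with the cocycle condition satisfied by the $\varphi_g$; it is morally immediate but is where the bookkeeping lives.

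Next I would handle the middle isomorphism using the hypothesis that $|G|$ is invertible on $\mathcal{F}$. In that case the averaging operator $e = \tfrac{1}{|G|}\sum_{g\in G} g$ is a well-defined idempotent endomorphism of $p_*\mathcal{F}$ whose image is exactly $(p_*\mathcal{F})^G = \mathcal{F}/G$; hence $\mathcal{F}/G$ is a direct summand of $p_*\mathcal{F}$ as a sheaf, and the functor of $G$-invariants is exact. Because $\Ho^i_{c}(X;-)$ is additive and compatible with the endomorphism $e$, applying it to the splitting gives $\Ho^i_{c}(X;(p_*\mathcal{F})^G) = e\cdot\Ho^i_{c}(X;p_*\mathcal{F}) = \Ho^i_{c}(X;p_*\mathcal{F})^G$. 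Composing the two isomorphisms then identifies $\Ho^i_{c}(X;\mathcal{F}/G)$ with $\Ho^i_{c}(\widetilde{X};\mathcal{F})^G$.

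Finally, for the Galois statement I would observe that $p$, the group $G$, the sheaf $\mathcal{F}$ and its equivariant structure are all defined over $\F_q$, so each arrow in the chain arises by base change from a morphism over $\F_q$ and is thus $\Fr_q$-equivariant. In particular the idempotent $e$ commutes with Frobenius precisely because the $G$-action is Galois-equivariant by hypothesis, so the decomposition into the invariant summand is preserved by $\Fr_q$. This yields the asserted Galois-equivariant isomorphism.
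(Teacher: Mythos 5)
Your proof is correct and follows essentially the same route as the paper: both factor through $p_*\mathcal{F}$, use the finiteness of $p$ to identify $\Ho^i_c(X;p_*\mathcal{F})$ with $\Ho^i_c(\widetilde{X};\mathcal{F})$, and exploit invertibility of $|G|$ via the averaging/transfer map to split off the invariants. The only difference is presentational --- the paper uses the unnormalized transfer $\tau=\sum_{g\in G}g(\cdot)$ and its two compositions with the inclusion, while you use the idempotent $e=\tfrac{1}{|G|}\sum_{g\in G}g$, and you spell out the Leray degeneration and $G$-compatibility steps that the paper leaves implicit.
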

\begin{proof}
Denote the inclusion $\mathcal{F}/G = (p_*\mathcal{F})^G \hookrightarrow p_*\mathcal{F}$ by $\iota$ and define a transfer morphism $(p_*\mathcal{F})^G \overset{\tau}{\longleftarrow} p_*\mathcal{F}$ by $\tau = \sum_{g\in G} g(\cdot)$. Clearly the composition $\tau\circ \iota$ is multiplication by $|G|$ on $(p_*\mathcal{F})^G$ and the reverse composition $\iota\circ \tau$ is $|G|$ times the projection onto the $G$-invariants of $p_*\mathcal{F}$.

Consider the induced maps on cohomology
$$
\Ho^i_c(X_{\overline{\F}_q}; \mathcal{F}/G) \overset{\tau}{\underset{\iota}{\leftrightarrows}} \Ho^i_c(\widetilde{X}_{\overline{\F}_q};\mathcal{F})
$$
Assuming multiplication by $|G|$ is an invertible transformation on $\mathcal{F}$, these maps induce the desired isomorphism. Lastly, since the $G$-action on $\mathcal{F}$ is Galois equivariant, so is $\tau$ as a sum of group elements.
\end{proof}

With this in hand, the proof of Theorem \ref{thm:intro_main} follows.
\begin{proof}[Proof of Theorem \ref{thm:intro_main}]
First, observe that by the linearity of the two sides of equation \ref{eq:intro_G-L-trace-formula}, it will suffice to prove the equality for a spanning set of class functions. In particular, it will suffice to consider only characters of $G$-representations.

If $\xi$ is a $|G|$-primitive root of unity, then $\Q[\xi]$ is a splitting field for $G$, i.e. every $G$-representation in characteristic $0$ is realized over $\Q[\xi]$ (see \cite[\S 12.3, Corrolary to Theroem 24]{Se}). Since there are only finitely many irreducible representations of $G$, and each one of those is represented by finitely many matrices with entries in $\Q[\xi]$, then for every $\ell$ excluding a finite set of primes every one of the matrix entries is an $\ell$-adic integer. Fix any $\ell$ prime to $q|G|$ and large enough to have this property, i.e. that every $G$-representation in a $\Q_\ell$-vector space is defined over $\Z_\ell$.

Suppose the class function $\chi$ is the character of a $G$-representation in a $n$-dimensional $\Q_\ell$-vector space $V$. Let $\overline{V}$ denote the constant $\ell$-adic sheaf of rank $n$ on $\widetilde{X}_{\overline{\F}_q}$, and define a $G$-action on $\overline{V}$ as described in Example \ref{ex:constant-sheaf-action}. Note that since $\Fr_q$ commutes with the $G$-action on $\widetilde{X}_{\overline{\F}_q}$, it also commutes with this $G$-action on $\overline{V}$.

The Grothendieck-Lefschetz trace formula \cite[Rapport, Theorem 3.2]{SGA} applied to the twisted sheaf $\overline{V}/G$ tells us in this case that
\begin{equation}\label{eq:GL-trace-classical}
\sum_{x\in X(\F_q)} \Tr\left((\Fr_q)_x \curvearrowright \left(\overline{V}/G\right)_x \right) = \sum_{i=0}^\infty (-1)^i \Tr\left( \Fr_q \curvearrowright \Ho^i_{c,\acute{e}t}\left(X_{\overline{\F}_q};\overline{V}/G\right) \right)
\end{equation}
The rest of the proof is rewriting this equation in the form stated by the theorem.

Starting with the left-hand side, the stalk $\left(\overline{V}/G\right)_x$ is the vector space of $G$-invariant functions on $p^{-1}(x)$, and restricting to any choice of lift $\widetilde{x}\in p^{-1}(x)$ gives an isomorphism
\begin{equation} \label{eq:restriction-iso}
\left(\overline{V}/G\right)_x \cong \left(V^{p^{-1}(x)}\right)^G \mor{r_{\widetilde{x}}} V^{H_{\widetilde{x}}}
\end{equation}
where $H_{\widetilde{x}} = \operatorname{Stab}_{\widetilde{x}}$ and $V^{H_{\widetilde{x}}}$ is the subspace of $H_{\widetilde{x}}$-invariants. Indeed, this follows immediately from Frobenius reciprocity (since $p^{-1}(x)$ is a transitive $G$-set, the representation $V^{p^{-1}(x)}$ is the coinduced module $\operatorname{coInd^G_{H_{\widetilde{x}}}}V$).

Pick an element $g_0\in G$ such that $\Fr_q(\widetilde{x})=g_0(\widetilde{x})$. Then for every $v\in V^{H_{\widetilde{x}}}$ let $s=r_{\widetilde{x}}^{-1}(v)$ be the associated $G$-invariant section, i.e. $s_{g(\widetilde{x})}=g(v)$. The following equalities hold
$$
v \overset{\left(r_{\widetilde{x}}\right)^{-1}}{\longmapsto}  s \overset{(\Fr_q)_x}{\longmapsto} s\circ \Fr_q  \overset{r_{\widetilde{x}}}{\mapsto} \left(s\circ  \Fr_q\right)_{\widetilde{x}} = s_{g_0(x)} = g_0(v)
$$
and the trace of $(\Fr_q)_{x}$ coincides with that of $g_0$ acting on $V^{H_{\widetilde{x}}}$. To compute this trace let
$$
\operatorname{P}_{H_{\widetilde{x}}} = \frac{1}{|H_{\widetilde{x}}|} \sum_{h\in H_{\widetilde{x}}} h : V \mor{} V^{H_{\widetilde{x}}}
$$
be the usual projection operator. Composing $g_0$ with $\operatorname{P}_{H_{\widetilde{x}}}$ gives a self-map on $V$ that restricts to $g_0$ on $V^{H_{\widetilde{x}}}$ and is zero on the complementary representation. Thus the trace of $g_0\circ \operatorname{P}_{H_{\widetilde{x}}}$ agrees with the trace of $g_0\curvearrowright V^{H_{\widetilde{x}}}$. On the other hand,
\begin{equation} \label{eq:local-trace}
\Tr(g_0\circ \operatorname{P}_{H_{\widetilde{x}}}) = \frac{1}{|H_{\widetilde{x}}|}\sum_{h\in H_{\widetilde{x}}} \Tr(g_0 h) = \frac{1}{|H_{\widetilde{x}}|}\sum_{h\in H_{\widetilde{x}}} \chi(g_0 h)
\end{equation}
Denoting the induced permutation $\Fr_q\curvearrowright p^{-1}(x)$ by $\sigma_x$, the above average is, by definition, the evaluation $\chi(\sigma_x)$. This shows that the left-hand side of the Grothendieck-Lefchetz trace formula (Equation \ref{eq:GL-trace-classical}) coincides with that of the formula that we are proving.

Now for the right-hand side of Equation \ref{eq:GL-trace-classical}. The transfer isomorphism of Lemma \ref{lem:transfer} gives
$$
\Ho^i_c(X_{\overline{\F}_q}; \overline{V}/G) = \Ho^i_c(\widetilde{X}_{\overline{\F}_q}; \overline{V} )^G.
$$
Recall that $\overline{V}$ is a constant sheaf, so the cohomology groups can be expressed as 
$$
\Ho^i_c(\widetilde{X}_{\overline{\F}_q}; \overline{V} ) \cong \Ho^i_c(\widetilde{X}_{\overline{\F}_q}; \Q_\ell )\otimes V.
$$
Extend scalars to $\overline{\Q}_\ell$ and decompose $\Ho^{i}_c(\widetilde{X}_{\overline{\F}_q})\otimes \overline{\Q}_\ell$ into generalized $\Fr_q$-eigenspaces
$$
\Ho^{i}_c(\widetilde{X}_{\overline{\F}_q};\Q_\ell)\otimes \overline{\Q}_\ell = \oplus_{\lambda} \Ho^{i}_c(\widetilde{X}_{\overline{\F}_q})_\lambda.
$$
Since $\Fr_q$ commutes with the $G$-action, the same decomposition holds after tensoring with $V$ and restricting to the $G$-invariant subrepresentation:
$$
(\Ho^i_c(\widetilde{X}_{\overline{\F}_q}; \Q_\ell )\otimes V)^G = \oplus_{\lambda} (\Ho^{i}_c(\widetilde{X}_{\overline{\F}_q})_\lambda\otimes V)^G.
$$
Therefore the trace of $\Fr_q$ is $ \sum_{\lambda} \lambda \dim (\Ho^{i}_c(\widetilde{X}_{\overline{\F}_q})_\lambda\otimes V)^G $. The theorem follows since
$$
\dim\left( {\Ho^{i}_c(\widetilde{X}_{\overline{\F}_q})_\lambda}\otimes V\right)^G = \langle \Ho^{i}_c(\widetilde{X}_{\overline{\F}_q})^*_\lambda, V \rangle_G
$$
and $V$ was chosen so that its character is $\chi$.
\end{proof}

\section{Example: the space of polynomials}\label{sec:polys}
Consider the Vieta map $v:\A^N\rightarrow\A^N$ given by sending an $N$-tuple $(z_1,\ldots,z_N)$ of geometric points in $\A^1$ to the (coefficients of the) unique monic polynomial that has precisely these roots including multiplicity, i.e.
$$
v(z_1,\ldots,z_N) = f(t)=\prod_{i=1}^{N}(t-z_i).
$$
Denote the space of monic degree $N$ polynomials by $\Poly^N$. This space is again $\A^N$, parametrized by the polynomials' coefficients, and the coordinates of the Vieta map $v$ are given by the elementary symmetric polynomials. Thus the map $v:\A^N\mor{}\A^N$ is a ramified $S_N$ cover befitting the context of Theorem \ref{thm:intro_main}, where $S_N$ acts on the domain by permuting the coordinates.

By considering $S_N$-invariant subvarieties $\widetilde{X}\subset \A^N$ cut out by various constraints, Theorem \ref{thm:intro_main} can be used to compute statistics of spaces of polynomials whose roots are subject to the same constraints. Example of such constraints include the space of polynomials with
\begin{itemize}
\item root multiplicity bounded by some fixed $k$;
\item all roots colinear; etc.
\end{itemize}

However, to get concrete information out of Theorem \ref{thm:intro_main} one must be able to evaluate $\chi(f)$ for every $S_N$-class function $\chi$ and every polynomial $f\in \Poly^N(\F_q)$. In particular, if $f(t)$ has multiple roots then it is a ramification point of the Vieta map, and the value $\chi(f)$ is an average over some coset in $S_N$. The current section is concerned with computing the evaluations $\chi(f)$ precisely.

\subsection{Evaluation of $\chi(f)$}\label{subsec:evaluating_chi}
For computing $\chi(f)$ on every $S_N$-class function $\chi$, it suffices to compute them on the following convenient spanning set of class functions.
\begin{definition}[\textbf{Character polynomials}]
For every $k\in\N$ let $X_k:S_N\mor{}\N$ be the cycle-counting function
$$
X_k(\sigma) = \# \text{ of $k$-cycles in }\sigma.
$$
A \underline{character polynomial} is any $P\in \Q[X_1,X_2,\ldots]$. Every character polynomial $P$ gives rise to class functions $P:S_N\rightarrow \Q$, which will also be denoted by $P$. Note that $X_k \equiv 0$ whenever $k>N$.

Furthermore, for every $k,\mu_k\in \N$ define a character polynomial
$$
\binom{X_k}{\mu_k}=\frac{1}{\mu_k!}X_k(X_k-1)\ldots(X_k-\mu_k+1).
$$
More generally, for every multi-index $\mu = (\mu_1,\mu_2,\ldots)$ define its \emph{norm} $\left\| \mu \right\| = \sum_{i=k}^{\infty} k\mu_k $ and the character polynomial
$$
\binom{X}{\mu} = \binom{X_1}{\mu_1}\binom{X_2}{\mu_2}\ldots.
$$
\end{definition}
Note that $\binom{X}{\mu}=0$ unless $\mu_k=0$ for all $k>N$, so a non-zero product of this form is necessarily finite.

\begin{note}
The class function $\binom{X}{\mu}:S_N\mor{} \Q$ is counting, for every $\sigma\in S_N$, the number of ways to choose $\mu_k$ disjoint $k$-cycles in $\sigma$ for all $k$ simultaneously. Note that when $\| \mu \| = N$ there is at most one way to arrange the cycles of $\sigma$ in this way, so $\binom{X}{\mu}$ is the indicator function of the conjugacy class $C_\mu$ specified by having exactly $\mu_k$ many $k$-cycles for every $k$. For this reason it follows that the functions $\binom{X}{\mu}$ with $\|\mu \|=N$ form a basis for the class functions on $S_N$. Lastly, if $\| \mu \| > N$ then there are not enough disjoint cycles in $\sigma$, so $\binom{X}{\mu}\equiv 0$.
\end{note}

\bigskip Theorem \ref{thm:intro_evaluating_xmu} describes the evaluation $\binom{X}{\mu}(f)$ for every polynomial $f\in \Poly^N(\F_q)$. The statement involves the divisibility of $f$ by other polynomials $g$, and its formulation used the division symbols $\epsilon_g$, recalled below.
\begin{notation}[\textbf{The algebra of division symbols}]
For every monic polynomial $g\in \F_q[t]$ introduce a formal symbol $\epsilon_g$ that measures divisibility by $g$ in the following way: for every polynomial $f\in \F_q[t]$ set
\begin{equation}\label{eq:evaluation_on_epsilons}
\epsilon_g(f) =  \begin{cases}
1 \quad & g|f \\ 0 \quad & \text{otherwise}.
\end{cases}
\end{equation}
Let $R_q$ be the free $\Q$-vector space spanned by these $\epsilon_g$ symbols with $g$ ranging over all monic polynomials in $\F_q[t]$. Extend the evaluation maps $f\mapsto \epsilon_g(f)$ linearly to $R_q$.

Furthermore define multiplication on the symbols $\epsilon_g$ by
\begin{equation}
\epsilon_g \cdot\epsilon_h = \epsilon_{gh}.
\end{equation}
turning $R_q$ into a $\Q$-algebra. One should take care and observe that the evaluation $f\mapsto \epsilon_g(f)$ is \textit{not} multiplicative on $R_q$. In fact, with respect to the evaluation on any $f\in \F_q[t]$, every element $\epsilon_g$ is nilpotent. This nilpotence property turns out to be an essential part of Theorem \ref{thm:intro_evaluating_xmu}.
\end{notation}

One can evaluate all functions $\binom{X}{\mu}$ simultaneously using a generating function: set 
$$
F(\textbf{t})= F(t_1,t_2,\ldots) := \sum_{\mu=(\mu_1,\mu_2,\ldots)} \binom{X}{\mu} t_1^{\mu_1}t_2^{\mu_2}\ldots
$$
This series evaluates on a polynomial $f\in \Poly^N(\F_q)$ term-wise, that is:
$$
F(\textbf{t})_{(f)} = \sum_{\mu}\, \binom{X}{\mu}{}_{(f)} \; t_1^{\mu_1}t_2^{\mu_2}\ldots
$$
and the resulting series is generated by that coefficients that we wish to compute. Theorem \ref{thm:intro_evaluating_xmu} then provides an explicit description of all evaluations by
\begin{equation}
F(\textbf{t})_{(f)} = \exp\left( \sum_{k=1}^\infty \sum_{p\in \Irr_{|k}}\deg(p)\epsilon_{p}^{\frac{k}{\deg(p)}} \, \frac{t_k}{k} \right){(f)}
\end{equation}
with $\Irr_{|k}$ being the set of monic irreducible polynomials of degree dividing $k$. Recall that to evaluate an element of $R_q$ on $f$ one must first expand any product into monomials in the $\epsilon_g$ symbols, and then evaluate according to the divisibility of $f$ by $g$.

\bigskip We illustrate how to use this result in a couple of examples.
\begin{example}[\textbf{Square-free polynomials}]
Consider the special case where $f\in \Poly^N$ is square-free. This case is simple since such $f$ are unramified points of the Vieta map, and furthermore one does not encounter the non-multiplicative behavior of the evaluation on $\epsilon_g$ symbols.

One the one hand, since $f$ is an unramified point, the Frobenius permutation on the roots determines an element $g_f\in S_N$ unique up to conjugation, and by definition $\chi(f)=\chi(g_f)$. In particular, the evaluation is multiplicative in $\chi$. Elementary Galois theory shows that for $\chi = X_k$ the value $X_k(f)$ is the number of degree $k$ irreducible factors of $f$. Using these facts one can easily write down a formula for $\binom{X}{\mu}(f)$ that does not go though Theorem \ref{thm:intro_evaluating_xmu}. However, the point of this example is to see how to use Equation \ref{eq:evaluation_on_xmu} in calculations, so we shall ignore this argument.

For every irreducible polynomial $p(t)$, evaluating the symbol $\epsilon_{p^r}$ on $f$ would give $0$ whenever $r>1$, as $f$ will not be divisible by such powers of $p$. Thus in Equation \ref{eq:evaluation_on_xmu}, the only contributions to the sum over $p\in \Irr_{|k}$ come from $p$ of degree $k$ precisely. We can therefore simplify the expression and get that on the set of square-free polynomials the following two functions coincide:
\begin{equation}\label{eq:square_free_X_mu}
\binom{X}{\mu} = \prod_{k=1}^\infty \frac{1}{\mu_k !} (\sum_{p\in \Irr_{=k}(\F_q)} \epsilon_p)^{\mu_k}
\end{equation}
where $\Irr_{=k}(\F_q)$ is the set of irreducible polynomials of degree \emph{equal} to $k$ over $\F_q$. Observe the following two properties of this product.
\begin{enumerate}
\item If $p$ and $q$ are coprime polynomials then $\epsilon_{p} (f) \cdot \epsilon_q (f) = \epsilon_{pq}(f)$ for every $f$. Therefore, since the $k$-th term of the product in Equation \ref{eq:square_free_X_mu} involves only irreducible polynomials of degree $k$, the different terms evaluate on $f$ multiplicatively, i.e.
$$
\left[\prod_{k=1}^\infty \frac{1}{\mu_k !} (\sum_{p\in \Irr_{=k}(\F_q)} \epsilon_p)^{\mu_k}\right] (f) = \prod_{k=1}^\infty \left[\frac{1}{\mu_k !} (\sum_{p\in \Irr_{=k}(\F_q)} \epsilon_p)^{\mu_k} (f)\right]
$$
and thus each term may be evaluated separately.
\item For each $k$, use again the fact that any power $(\epsilon_p)^r=\epsilon_{p^r}$ evaluates to $0$ on a square-free polynomial whenever $r>1$. Expanding the $\mu_k$-th power and eliminating high powers of $\epsilon_p$'s
$$
\frac{1}{\mu_k !}(\sum_{p\in \Irr_{=k}(\F_q)} \epsilon_p)^{\mu_k} = \sum_{\{p_1,\ldots,p_{\mu_k}\}\subset \Irr_{=k}(\F_q)} \epsilon_{p_1\ldots p_{\mu_k}}
$$
where the sum goes over all sets of degree-$k$ irreducibles of cardinality $\mu_k$. Since the $\epsilon$ symbols evaluate to either $1$ or $0$ on $f$, it follows that the sum evaluates to \textit{the number of ways to choose $\mu_k$ distinct irreducible factors of $f$  with degree $k$}.
\end{enumerate}
\begin{cor}
If $f$ is a square-free polynomial then
$$
\binom{X}{\mu}(f) = \# \text{ ways to choose $\mu_k$ many degree $k$ irreducible factors of $f$ for every $k$. }
$$
\end{cor}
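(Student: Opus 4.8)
The plan is to assemble the corollary directly from the two observations (1) and (2) already established for the product in Equation \ref{eq:square_free_X_mu}. The starting point is the identity, valid on the locus of square-free polynomials,
$$
\binom{X}{\mu} = \prod_{k=1}^\infty \frac{1}{\mu_k !}\Bigl(\sum_{p\in \Irr_{=k}(\F_q)} \epsilon_p\Bigr)^{\mu_k},
$$
so that evaluating on a square-free $f$ amounts to understanding the right-hand side at $f$. First I would invoke observation (1): the $k$-th factor involves only degree-$k$ irreducibles, and since irreducibles of distinct degrees are pairwise coprime, the relation $\epsilon_g(f)\epsilon_h(f)=\epsilon_{gh}(f)$ holds across factors belonging to different $k$. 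Hence the evaluation distributes over the product,
$$
\binom{X}{\mu}(f) = \prod_{k=1}^\infty \left[\frac{1}{\mu_k !}\Bigl(\sum_{p\in \Irr_{=k}(\F_q)} \epsilon_p\Bigr)^{\mu_k}(f)\right],
$$
allowing each degree $k$ to be treated in isolation.

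Next I would apply observation (2) to each factor separately. Because $f$ is square-free, every power $\epsilon_p^r=\epsilon_{p^r}$ with $r>1$ evaluates to $0$ on $f$; expanding the $\mu_k$-th power and discarding the vanishing terms collapses the factor to $\sum \epsilon_{p_1\cdots p_{\mu_k}}$, summed over size-$\mu_k$ subsets of $\Irr_{=k}(\F_q)$. Each surviving symbol evaluates to $1$ exactly when all of $p_1,\ldots,p_{\mu_k}$ divide $f$, so the factor equals the number of ways to choose $\mu_k$ distinct degree-$k$ irreducible factors of $f$.

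Finally I would observe that a choice of factors at one degree is completely independent of the choice at any other degree, since no irreducible polynomial has two distinct degrees; therefore the product of these per-degree counts is precisely the number of ways to choose, simultaneously for every $k$, a set of $\mu_k$ distinct degree-$k$ irreducible factors of $f$, which is the assertion of the corollary. I should also note that the product is genuinely finite, since $\binom{X}{\mu}$ vanishes unless $\mu_k=0$ for all $k>N$ and a degree-$N$ polynomial has only finitely many irreducible factors, so no convergence issue arises. The only point requiring any care is this passage from the per-degree counts to the joint count, which rests entirely on the independence across degrees; everything else is a direct transcription of properties (1) and (2).
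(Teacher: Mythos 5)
Your proposal is correct and follows essentially the same route as the paper: the paper's own argument consists precisely of Equation \ref{eq:square_free_X_mu} together with its observations (1) (coprimality across degrees lets the evaluation distribute over the product) and (2) (vanishing of $\epsilon_{p^r}(f)$ for $r>1$ collapses each factor to a sum over $\mu_k$-element subsets of $\Irr_{=k}(\F_q)$), exactly as you have assembled them. Your closing remarks on the independence of choices across degrees and on finiteness of the product are implicit in the paper but add no new mathematical content.
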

\end{example}

\begin{example}[\textbf{Degree $1$ character polynomials}]
The evaluation of $X_k$ is most straightforward. This is the expression $\binom{X}{\mu}$ with $\mu_k=1$ and $\mu_j=0$ for all $j\neq k$. In this case Equation \ref{eq:evaluation_on_xmu} simplifies to
$$
X_k = \frac{1}{k} \sum_{p\in \Irr_{|k}} \deg(p)\epsilon_{p}^{\frac{k}{\deg(p)}} = \sum_{d|k} \frac{d}{k}\sum_{p\in \Irr_{=d}} \epsilon_{p^{k/d}}.
$$
When evaluating this expression on a polynomial $f$, since $\epsilon$-symbols evaluate to either $0$ or $1$, the sum becomes a simple count:
\begin{equation}\label{eq:X_k_on_f}
X_k(f) = \sum_{d|k} \frac{d}{k} \# \{\text{ irreducible factors of degree $d$ that divide $f$ at least $k/d$ times }\}.
\end{equation}

Considering the two extreme cases: if $f$ is square-free this reduces back to the count of degree $k$ irreducible factors; and if $f = p(t)^r$ with $p$ irreducible of degree $d$ then $X_{d\cdot \ell}(f)=\frac{1}{\ell}$ if $r\geq\ell$ and $0$ otherwise.

\begin{remark}
The origin of the sum in Equation \ref{eq:X_k_on_f} is clear when one considers the stack quotient $[\A^N/S_N]$: the Vieta map factors though the universal map from the quotient stack to the quotient variety
$$
[A^N/S_N] \mor{} A^N/S_N = \Poly^N
$$
and the fiber of this map over $f$ contains multiple points that each contribute a term to $X_k(f)$. At the same time, the points on the stack have automorphisms which account for the denominators.
\end{remark}
\end{example}

\begin{proof}[Proof of Theorem \ref{thm:intro_evaluating_xmu}]
Let $f(t)$ be a monic polynomial over $\F_q$. Suppose $f$ decomposes as
$$
f= p_1(t)^{r_1}\ldots p_n(t)^{r_n}
$$
where the $p_i(t)$'s are the distinct irreducible factors of $f$ and set $d_i=\deg(p_i)$. Over the algebraic closure $\overline{\F}_q$ every factor $p_i(t)$ decomposes further as a product of linear terms
$$
p_i(t)= (t-\alpha_{i,1})(t-\alpha_{i,2})\ldots (t-\alpha_{i, d_i})
$$
with all $\alpha_{i,k}$ distinct. Thus $f(t)$ is the product
$$
f(t) = \prod_{\substack{1\leq i \leq n\\ 1\leq k \leq d_i}}(t-\alpha_{i,k})^{r_i}.
$$
and the degree of $f$ is $N=\sum_{i=1}^n d_i r_i$. 

Under the Vieta map $v:\A^N \mor{} \A^N$
$$
(z_1,\ldots,z_N) \mapsto p(t)=(t-z_1)\ldots(t-z_N)
$$
the polynomial $f(t)$ is the image of the $N$-tuple
$$
\alpha_f := (\underbrace{\alpha_{1,1},\ldots,\alpha_{1,1}}_{r_1\text{ times}}, \underbrace{\alpha_{1,2},\ldots,\alpha_{1,2}}_{r_1\text{ times}},\ldots,
\underbrace{\alpha_{1,d_1},\ldots,\alpha_{1,d_1}}_{r_1\text{ times}}, \underbrace{\alpha_{2,1},\ldots,\alpha_{2,1}}_{r_2\text{ times}},\ldots).
$$
Under the $S_N$-action of permuting the coordinates, the stabilizer of $\alpha_f$ is the Young subgroup 
$$
H_f := \underbrace{S_{r_1}\times \ldots \times S_{r_1}}_{d_1 \text{ times}}\times \ldots \times \underbrace{S_{r_n}\times \ldots \times S_{r_n}}_{d_n \text{ times}} = (S_{r_1})^{d_1}\times \ldots (S_{r_n})^{d_n}.
$$
For clarity of notation, relabel the points of the set $\{1,\ldots, N\}$ by choosing a bijection
$$
T_f = \{ (i,j,k) : 1\leq i\leq n, \, 1\leq j\leq r_i,\, k\in \Z/d_i\Z \} \mor{\cong} \{1,\ldots,N\}
$$
and thinking of $S_N$ as the symmetry group of $T_f$. One should think of the $3$-tuple $(i,j,k)\in T_f$ as corresponding to the root $\alpha_{i,k}$ of the $j$-th copy of $p_i(t)$ (that is, $i$ indexes the irreducible factor, $j$ indexes which of the multiple copies of $p_i$ one is considering, and $k$ indexes the roots of $p_i$).

The Frobenius $\Fr_q$ acts on the roots of $f(t)$ by the cyclic permutation $\alpha_{i,k}\mapsto \alpha_{i,k+1}$, where the second subscript $k$ belongs to $\Z/d_i\Z$. Thus the $\Fr_q$-action is lifted by the permutation $\tau\in S_N$ given by $\tau(i,j,k)=(i,j,k+1)$. 

\bigskip Our task is now to compute the value of $\chi(\sigma_f)$ for every class function $\chi$, i.e. the average value of $\chi$ on the coset $\tau H_f$. Define a new function $\chi^{(\tau)}:S_N\mor{}\Q$ by
$$
\chi^{(\tau)}(h) = \chi(\tau \cdot h).
$$
Then it is clear that
$
\displaystyle{\chi(f) = \frac{1}{|H_f|}\sum_{h\in H_f} \chi^{(\tau)}(h) = \mathbb{E}_{H_f}[\chi^{(\tau)}]}
$
and that the operation $\chi \mapsto \chi^{(\tau)}$ commutes with arithmetic operations on functions. Thus to evaluate $f$ on arbitrary character polynomials, one can start by understanding the function $X_k^{(\tau)}$. For this purpose introduce the following notation.

\begin{definition}[\textbf{The projection operator $m_{i_0}$}]
For every $1\leq i_0 \leq n$ (i.e. for every irreducible factor $p_{i_0}|f$) define a projection operator $m_{i_0}: H_f \mor{} S_{r_{i_0}}$ as follows.

An element of $H_f=\prod_{i,k}S_{r_i}$ is given by a sequence of permutations $(h_{i,k})_{i,k}$ where $h_{i,k}\in S_{r_i}$. Define
$$
m_{i_0} : (h_{i,k})_{i,k} \mapsto h_{i_0,d_i}\cdot h_{i_0,d_i-1}\cdot  \ldots\cdot  h_{i_0,2}\cdot h_{i_0,1}.
$$
\end{definition}

\begin{claim} \label{claim:X_tau}
For every $r\in \N$ the function $X_r^{(\tau)}$ can be presented as a sum
\begin{equation}
X_r^{(\tau)} = \sum_{\substack{1\leq i \leq n \\ d_i \mid r}} X_{\frac{r}{d_i}} \circ m_i.
\end{equation}
\end{claim}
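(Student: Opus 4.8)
The plan is to write the permutation $\tau h$ explicitly on the labelling set $T_f$ and then count its cycles block by block. An element $h=(h_{i,k})\in H_f$ acts by $h.(i,j,k)=(i,h_{i,k}(j),k)$, and $\tau$ acts by $\tau.(i,j,k)=(i,j,k+1)$, so their composite is $\tau h.(i,j,k)=(i,h_{i,k}(j),k+1)$. Crucially this preserves the first coordinate $i$, so $\tau h$ is a disjoint union of permutations $\pi_i$ acting on the blocks $B_i=\{(i,j,k)\}\cong\{1,\dots,r_i\}\times\Z/d_i\Z$. Since the number of $r$-cycles is additive over the disjoint blocks $B_i$, it will suffice to show that $\pi_i$ contributes $X_{r/d_i}(m_i(h))$ when $d_i\mid r$ and nothing otherwise; summing over $i$ then yields the claim.

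First I would analyze a single block by passing to the return map to the slice $k=0$. Iterating $\pi_i$ from a point $(j,0)$, after $d_i$ steps the $\Z/d_i\Z$-coordinate comes back to $0$ while the $j$-coordinate has been acted on by the ordered product $\phi_i := h_{i,d_i-1}\cdots h_{i,1}h_{i,0}$. A short computation shows $\phi_i$ is a cyclic rotation of the factors of $m_i(h)$, hence conjugate to it in $S_{r_i}$; since $X_{r/d_i}$ is a class function, this ambiguity is harmless. The key combinatorial observation is then that a length-$\ell$ cycle of $\phi_i$ on $\{1,\dots,r_i\}$ lifts to a single $\pi_i$-cycle of length $\ell d_i$: such an orbit meets the slice $k=0$ in exactly the $\ell$ points of the $\phi_i$-cycle, and it spends exactly $d_i$ steps between consecutive visits to that slice. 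This establishes a bijection between $\phi_i$-cycles of length $\ell$ and $\pi_i$-cycles of length $\ell d_i$; in particular every $\pi_i$-cycle has length divisible by $d_i$.

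With the bijection in place the conclusion is immediate: an $r$-cycle of $\pi_i$ can occur only when $d_i\mid r$, and then the $r$-cycles of $\pi_i$ correspond bijectively to the $(r/d_i)$-cycles of $\phi_i$, whose number is $X_{r/d_i}(\phi_i)=X_{r/d_i}(m_i(h))$. Summing the block contributions gives $X_r(\tau h)=\sum_{\substack{1\leq i\leq n\\ d_i\mid r}} (X_{r/d_i}\circ m_i)(h)$, which is exactly $X_r^{(\tau)}=\sum_{\substack{1\leq i\leq n\\ d_i\mid r}} X_{r/d_i}\circ m_i$.

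The main obstacle is the cycle count for a single block $\pi_i$ in the middle step: verifying that the orbit length multiplies by exactly $d_i$ (neither more nor less) and that the slice-return correspondence is genuinely a bijection. The bookkeeping with the cyclic index $k\in\Z/d_i\Z$ and the order in which the $h_{i,k}$ compose must be tracked carefully, including the observation that changing the reference slice only replaces $\phi_i$ by a conjugate, so that applying the class function $X_{r/d_i}$ at the end removes any dependence on these choices.
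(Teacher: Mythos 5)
Your proof is correct and follows essentially the same route as the paper: decompose $\tau h$ into blocks according to the preserved first coordinate $i$, then analyze the return map to a fixed slice of the $\Z/d_i\Z$-coordinate and identify it (up to conjugacy, which is harmless since $X_{r/d_i}$ is a class function) with $m_i(h)$, so that $r$-cycles of the block correspond bijectively to $(r/d_i)$-cycles of $m_i(h)$. The only cosmetic difference is your choice of base slice $k=0$ rather than $k=1$, which forces the conjugation remark that the paper's convention avoids; if anything, your explicit cycle-to-cycle bijection is stated slightly more carefully than the paper's count of slice points lying in $r$-cycles.
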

\begin{proof}
Fix an element $h=(h_{i,k})_{i,k}\in H_f$. The value $X_r(\tau h)$ is the number of $r$-cycles the permutation $\tau h\in \tau H_f$ has when acting on the set $T_f$. The action of this permutation on an element $(i,j,k)\in T_f$ is by
$$
(i,j,k) \overset{h}{\mapsto} (i,h_{i,k}(j),k) \overset{\tau}{\mapsto} (i,h_{i,k}(j),k+1).
$$
Observe that the $i$-value is fixed by this action. Thus every cycle of $\tau h$ has a well-defined $i$-value, and there is a decomposition
$$
X_r^{(\tau)} = \sum_{i=1}^n X_r^{(i)}
$$
where $X_r^{(i)}$ counts only the number of $r$-cycles with $i$-value equal to $i$. It therefore remains to show that $X_r^{(i)} = X_{\frac{r}{d_i}} \circ m_i$ if $d_i \mid r$ and $0$ otherwise.

Fix $i$. An element $(i,j,1)\in R_f$ belongs to an $r$-cycle if it is fixed by $(\tau h)^r$ and not by any smaller power of $\tau \cdot h$. Compute
$$
(\tau h)^m (i,j,1) = (i, (h_{i,m}\cdot \ldots\cdot  h_{i,2} \cdot h_{i,1})(j) , m+1)
$$
so for $(i,j,1)$ to be fixed, demand that $m+1 \equiv 1 \mod{d_i}$, i.e. that $d_{i} \mid m$. Restricting to this case, write $m=\ell \cdot d_i$. Now the $j$-value after applying $(\tau h)^m$ is
$$
(h_{i,m}\cdot \ldots\cdot h_{i,1})(j) = (h_{i,d_i}\cdot \ldots\cdot  h_{i,1})^\ell(j) = m_{i}(h)^{\ell}(j)
$$
This shows that $(i,j,1)$ belongs to an $r$ cycle if and only if $d_i\mid r$ and $j$ belongs to an $\frac{r}{d_i}$-cycle of $m_i(h)$.

Since every orbit of $\tau\cdot h$ includes an element of the form $(i,j,1)$ (as any triple $(i,j,k)$ goes to an element of the form $(i, j', 1)$ after $(d_i-k+1)$ applications of $\tau h$), one only need to count the number of such elements that belong to $r$-cycles. By the previous paragraph, the number of $j$'s for which $(i,j,1)$ belong to $r$-cycle is equal to the number of $\frac{r}{d_i}$-cycles of $m_i(h)$ when $d_i\mid r$, and that otherwise there are none. Thus the proclaimed equality follows
$$
X^{(i)}_r(\tau h) = \begin{cases} X_{\frac{r}{d_i}} (m_i(h)) \quad & d_i \mid r \\ 0 & \text{ otherwise}
\end{cases}.
$$
\end{proof}

We are now ready to compute $\binom{X}{\mu}(f)$ for every multi-index $\mu$. Our calculation proceeds using the generating function introduced above. Note that for every fixed $N$, the class functions $X_k$ with $k>N$ are identically $0$, so the function $F(\textbf{t})$ is really a polynomial in the variables $t_1,\ldots,t_N$. Throughout the proceeding calculation one should remember that all expressions involved are really finite.

Apply the operation $\chi \mapsto \chi^{(\tau)}$ to $F$ termwise: $F^{(\tau)}(\textbf{t}) = \sum_{\mu}\binom{X}{\mu}^{(\tau)}\textbf{t}^\mu$. Using the observation of Claim \ref{claim:X_tau} it follows that
\begin{eqnarray}
F^{(\tau)} = \prod_{k=1}^{\infty} (1+t_k)^{\sum_{\{i: d_i|k\}}^n X_{k/d_i}\circ m_i} =  \prod_{k=1}^\infty\prod_{\{i: d_i|k\}}^n (1+t_k)^{X_{k/d_i}\circ m_i} = \prod_{i=1}^\infty \prod_{\ell=1}^{\infty}(1+t_{d_i\ell})^{X_\ell\circ m_i}.
\end{eqnarray}
where in the last equality we relabeled $k=\ell\cdot d_i$ to include only pairs $(i,k)$ in which $d_i|k$. One now notices that the function $F^{(\tau)}$ factors though the product of projections
$$
m = (m_1,\ldots,m_n): H_f \mor{} S_{r_1}\times\ldots\times S_{r_n}.
$$
The next observation simplifies the problem greatly.
\begin{claim}
The product of projections
$$
m = (m_1,\ldots,m_n): H_f \mor{} S_{r_1}\times\ldots\times S_{r_n}
$$
is precisely an $\frac{|H_f|}{|S_{r_1}\times\ldots\times S_{r_n}|}$-to-$1$ function. In other words, the map $m$ is measure preserving between the two uniform probability spaces.
\end{claim}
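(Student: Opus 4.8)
The plan is to exploit the fact that the combined map $m=(m_1,\ldots,m_n)$ is a direct product of the individual projections, each depending on a disjoint block of coordinates of $H_f$. Concretely, $H_f=\prod_{i=1}^n (S_{r_i})^{d_i}$, and $m_i$ depends only on the $d_i$ coordinates $h_{i,1},\ldots,h_{i,d_i}$ making up the $i$-th block. Consequently $m$ is identified with the product map $\prod_{i=1}^n m_i$, where $m_i:(S_{r_i})^{d_i}\to S_{r_i}$. Since the fibers of a product of maps are the products of the individual fibers, it suffices to show that each $m_i$ is uniformly $(r_i!)^{d_i-1}$-to-$1$; multiplying these fiber sizes then yields the total fiber size $\prod_{i=1}^n (r_i!)^{d_i-1}=|H_f|/|S_{r_1}\times\cdots\times S_{r_n}|$.

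This reduces the claim to the following elementary group-theoretic fact, which I would isolate as a lemma: for any finite group $G$ and any $d\geq 1$, the iterated multiplication map
\[
\mu:G^d\to G,\qquad (g_1,\ldots,g_d)\mapsto g_d g_{d-1}\cdots g_1
\]
is surjective and all of its fibers have cardinality exactly $|G|^{d-1}$. Here one applies this with $G=S_{r_i}$ and $d=d_i$.

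To prove the lemma I would fix a target $g\in G$ and exhibit an explicit bijection between the fiber $\mu^{-1}(g)$ and $G^{d-1}$. Given any tuple $(g_1,\ldots,g_{d-1})\in G^{d-1}$ chosen freely, the equation $g_d g_{d-1}\cdots g_1=g$ has the unique solution $g_d=g\,(g_{d-1}\cdots g_1)^{-1}$, since right multiplication by a group element is invertible. Conversely, every element of the fiber is recovered from its first $d-1$ coordinates, which determine the last one. This sets up a bijection $\mu^{-1}(g)\cong G^{d-1}$, establishing surjectivity (each $g$ has nonempty fiber) and the constant fiber size $|G|^{d-1}$ simultaneously. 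When $d=1$ the map $\mu$ is the identity and the count $|G|^0=1$ holds trivially.

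Finally, a $c$-to-$1$ surjection $m:H_f\to S_{r_1}\times\cdots\times S_{r_n}$ with constant $c=|H_f|/|S_{r_1}\times\cdots\times S_{r_n}|$ automatically pushes the uniform measure on $H_f$ forward to the uniform measure on the target: each point of the target has preimage-probability $c/|H_f|=1/|S_{r_1}\times\cdots\times S_{r_n}|$. This yields the asserted measure-preserving property. I do not anticipate a genuine obstacle here; the only points requiring care are the bookkeeping of the reverse-order product in the definition of $m_i$ (which is immaterial, since the argument only uses invertibility of a single factor) and verifying that the coordinate blocks of $H_f$ are genuinely independent, so that fiber sizes multiply across the blocks.
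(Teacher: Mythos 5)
Your proposal is correct and takes essentially the same approach as the paper: the paper likewise fixes a target $(\sigma_1,\ldots,\sigma_n)$, notes that coordinates with different index $i$ can be chosen independently, lets $(h_{i,1},\ldots,h_{i,d_i-1})$ be free, and solves uniquely for $h_{i,d_i}=\sigma_i\cdot(h_{i,d_i-1}\cdots h_{i,1})^{-1}$, producing a bijection of each fiber with $\prod_{i=1}^n S_{r_i}^{d_i-1}$. Your packaging of this step as a standalone lemma on the iterated multiplication map $G^d\to G$ is a purely cosmetic modularization of the identical argument.
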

\begin{proof}
Fix any element $(\sigma_1,\ldots,\sigma_n)\in S_{r_1}\times\ldots\times S_{r_n}$. Then for every $i$ and every choice of elements $(h_{i,1},\ldots,h_{i,d_i-1})\in S_{r_i}^{d_i-1}$ there exists a unique $h_{i,d_i}$ that satisfies the equality
$$
h_{i,d_i}\cdot h_{i,d_i-1}\cdot \ldots \cdot h_{i,1} = \sigma_i
$$
namely $h_{i,d_i}= \sigma_i\cdot (h_{i,d_i-1}\cdot \ldots \cdot h_{i,1})^{-1}$. Since the terms with different index $i$ do not appear in this expression, they may be chosen independently.

It follows that there is a bijection $\prod_{i=1}^n S_{r_i}^{d_i-1} \cong m^{-1}(\sigma_1,\ldots,\sigma_n)$, thus demonstrating the claim.
\end{proof}
The fact that $F^{(\tau)}$ factors through the measure preserving map $m$ allows one to compute the $H_f$-expected value by computing it on $\prod_{i=1}^n S_{r_i}$ instead:
\begin{equation}
\mathbb{E}_{H_f}[F^{(\tau)}\circ m] = \mathbb{E}_{\prod_{i=1}^n S_{r_i}}[F^{(\tau)}] = \prod_{i=1}^n \mathbb{E}_{S_{r_i}}[\prod_{\ell=1}^{\infty}(1+t_{d_i\ell})^{X_{\ell}}].
\end{equation}

The derivation of our formula follows from the following key observation.
\begin{thm}\label{thm:CEF_expected_value}
Fix $r\in \N$ and let $\epsilon$ be a formal nilpotent element such that $\epsilon^{r+1}=0$ but $\epsilon^r\neq 0$. Then for every $d\in \N$ there is an equality
\begin{equation}
G_{d,r}(\textbf{t},\epsilon):=\mathbb{E}_{S_{r}}[\prod_{\ell=1}^{\infty}(1+\epsilon^{\ell} t_{d\ell})^{X_{\ell}}] = \exp\left( \sum_{\ell}\epsilon^{\ell}\frac{t_{d\ell}}{\ell} \right).
\end{equation}

Furthermore, the introduction of $\epsilon$ to the left-hand side of the equation does not cause any loss of information in the sense that replacing every nonzero power of $\epsilon$ by $1$ recovers the expectations. Formally, denoting the ring $\Q[t_1,t_2,\ldots]$ by $A$, the $A$-module map $A[\epsilon]/(\epsilon^{r+1}) \mor{\phi_\epsilon} A$ defined by $\epsilon^j\mapsto 1$ for all $j\leq r$ sends
$$
\phi_\epsilon: G_{d,r}(\textbf{t},\epsilon)\mapsto G_{d,r}(\textbf{t},1).
$$
The latter function is the generating function of the expectations $\mathbb{E}_{S_r}\left[\binom{X_{\ell/d}}{\mu}\right]$(defined to be $0$ unless $d|\ell$).
\end{thm}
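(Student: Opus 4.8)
The plan is to deduce the identity from the classical cycle-index (exponential) generating function for the symmetric groups, treating $\epsilon$ as a free formal variable until the very end. Recall that if one assigns a weight $x_\ell$ to every $\ell$-cycle, then $\sum_{r\geq 0} \mathbb{E}_{S_r}\!\big[\prod_\ell x_\ell^{X_\ell}\big]\, u^r = \exp\big(\sum_{\ell\geq 1} x_\ell\, u^\ell/\ell\big)$, where $u$ is a bookkeeping variable recording $r$. First I would substitute $x_\ell = 1 + \epsilon^\ell t_{d\ell}$, so that the left-hand side becomes $\sum_r G_{d,r}(\textbf{t},\epsilon)\, u^r$ (with $\epsilon$ free), and the right-hand side factors as
\[
\exp\Big(\sum_\ell \frac{u^\ell}{\ell}\Big)\cdot \exp\Big(\sum_\ell \frac{t_{d\ell}}{\ell}(\epsilon u)^\ell\Big) = \frac{1}{1-u}\, E(\epsilon u),
\]
where I write $E(w) = \exp\big(\sum_\ell t_{d\ell}\, w^\ell/\ell\big)$ and use $\sum_\ell u^\ell/\ell = -\log(1-u)$. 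The crucial algebraic observation is that $\epsilon^\ell u^\ell = (\epsilon u)^\ell$, so the only genuinely $\epsilon$-dependent factor is a power series in the single product $\epsilon u$.

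Writing $E(w) = \sum_{n\geq 0} e_n(\textbf{t})\, w^n$, I would extract the coefficient of $u^r$ from $\frac{1}{1-u}E(\epsilon u) = \sum_m u^m \sum_n e_n \epsilon^n u^n$, which yields $G_{d,r}(\textbf{t},\epsilon) = \sum_{n=0}^{r} e_n(\textbf{t})\,\epsilon^n$. This partial sum already has $\epsilon$-degree at most $r$, so it is unchanged upon passing to the quotient by $\epsilon^{r+1}$; on the other hand $E(\epsilon) = \sum_{n\geq 0} e_n \epsilon^n$ reduces to exactly the same truncation modulo $\epsilon^{r+1}$. Hence $G_{d,r}(\textbf{t},\epsilon) = E(\epsilon) = \exp\big(\sum_\ell \epsilon^\ell t_{d\ell}/\ell\big)$ in the nilpotent ring, which is the asserted identity.

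For the furthermore, I would expand the defining product $\prod_\ell (1+\epsilon^\ell t_{d\ell})^{X_\ell}$ into the monomial basis, giving $G_{d,r}(\textbf{t},\epsilon) = \sum_\mu \mathbb{E}_{S_r}\!\big[\binom{X}{\mu}\big]\,\epsilon^{\|\mu\|}\prod_\ell t_{d\ell}^{\mu_\ell}$, where the power of $\epsilon$ in each term is precisely the norm $\|\mu\|$. Since $\binom{X}{\mu}$ vanishes identically on $S_r$ whenever $\|\mu\|>r$, every surviving monomial has $\epsilon$-degree at most $r$; the $A$-linear map $\phi_\epsilon$ therefore sends each such term $a\,\epsilon^{\|\mu\|}$ (with $a\in A$) to $a$, which is exactly the effect of setting $\epsilon=1$. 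This identifies $\phi_\epsilon(G_{d,r}(\textbf{t},\epsilon))$ with $G_{d,r}(\textbf{t},1) = \sum_\mu \mathbb{E}_{S_r}\!\big[\binom{X}{\mu}\big]\prod_\ell t_{d\ell}^{\mu_\ell}$, and after reindexing the variables by $k=d\ell$ (so that $t_k$ tracks $X_{k/d}$) this is precisely the generating function of the expectations $\mathbb{E}_{S_r}\!\big[\binom{X_{\ell/d}}{\mu}\big]$.

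I expect the main obstacle to be bookkeeping rather than conceptual: one must thread the formal nilpotent $\epsilon$ through the argument by first working over a polynomial ring in a free $\epsilon$ and only specializing to $A[\epsilon]/(\epsilon^{r+1})$ at the very end, and one must resist the temptation to ``set $\epsilon=1$'' directly inside the exponential. Since $\phi_\epsilon$ is only $A$-linear and not a ring map, that final step is legitimate only after expanding into monomials, where the vanishing $\binom{X}{\mu}\equiv 0$ for $\|\mu\|>r$ guarantees that the truncation at $\epsilon^{r+1}$ loses no information.
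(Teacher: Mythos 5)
Your proof is correct, but it takes a genuinely different route from the paper's. The paper expands $\prod_\ell(1+\epsilon^\ell t_{d\ell})^{X_\ell}$ in the monomial basis as $\sum_\mu \binom{X}{\mu}\epsilon^{\|\mu\|}\prod_\ell t_{d\ell}^{\mu_\ell}$ and then invokes the computation of \cite[Proposition 3.9]{CEF-pointcounts} (Fact \ref{fact:expectation}), namely $\mathbb{E}_{S_r}\bigl[\binom{X}{\mu}\bigr]=\prod_\ell \ell^{-\mu_\ell}(\mu_\ell !)^{-1}$ for $\|\mu\|\le r$; the role of the nilpotent $\epsilon$ there is to make this formula hold uniformly in $\mu$ (both sides vanish when $\|\mu\|>r$), after which the sum reassembles into the exponential. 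You instead start from P\'{o}lya's cycle-index series $\sum_r \mathbb{E}_{S_r}[\prod_\ell x_\ell^{X_\ell}]\,u^r=\exp\bigl(\sum_\ell x_\ell u^\ell/\ell\bigr)$, substitute $x_\ell = 1+\epsilon^\ell t_{d\ell}$ with $\epsilon$ a free variable, factor out $\frac{1}{1-u}$, and extract the coefficient of $u^r$, passing to $A[\epsilon]/(\epsilon^{r+1})$ only at the very end. Your route buys the sharper intermediate fact that $G_{d,r}(\textbf{t},\epsilon)$, as a polynomial in a free $\epsilon$, equals the degree-$\le r$ truncation $\sum_{n\le r}e_n(\textbf{t})\epsilon^n$ of $E(\epsilon)$ --- the factor $\frac{1}{1-u}$ is exactly what turns a series into its partial sums --- and it effectively reproves the CEF expectation fact rather than citing it; the paper's route avoids the auxiliary bookkeeping variable $u$ and keeps the argument aligned with Fact \ref{fact:expectation}, which it reuses later (in the Necklace-relation claim and in Theorem \ref{thm:average_young}). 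For the ``furthermore'' part your argument coincides with the paper's: both expand into monomials and use $\binom{X}{\mu}\equiv 0$ on $S_r$ for $\|\mu\|>r$, together with the observation (which you rightly emphasize) that $\phi_\epsilon$ is only $A$-linear and not a ring map, so one may set $\epsilon=1$ only after expanding into monomials, never inside the exponential.
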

\begin{proof}
Expand the left-hand side
\begin{equation}
\prod_{\ell=1}^{\infty}(1+\epsilon^{\ell} t_{d\ell})^{X_{\ell}} = \sum_{\mu} \binom{X}{\mu}\epsilon^{1\mu_1+2\mu_2+\ldots}t_d^{\mu_1}t_{2d}^{\mu_2}\ldots = \sum_{\mu}\binom{X}{\mu}\epsilon^{\|\mu\|}t_d^{\mu_1}t_{2d}^{\mu_2}\ldots.
\end{equation}
Recall that if $\|\mu \| > r$ then $\binom{X}{\mu}\equiv 0$, so setting $\epsilon^i=1$ for all $i\leq r$ is the same as having $1$'s in place of $\epsilon$ everywhere.

Use the following calculation of \cite{CEF-pointcounts}.
\begin{fact}[ In the proof of {\cite[Proposition 3.9]{CEF-pointcounts}} ]\label{fact:expectation}
If $\mu$ is a multi-index with $\|\mu\|\leq r $ then 
$$
\mathbb{E}_{S_r}\left[ \binom{X}{\mu} \right] = \prod_{\ell=1}^{\infty} \frac{1}{\ell^{\mu_\ell}\mu_\ell !}.
$$
\end{fact}
When multiplying this equation by $\epsilon^{\| \mu \|}$ one gets an equality that holds for all $\mu$. Thus when evaluating the expectation on the generating function
$$
\mathbb{E}_{S_r}\left[ \sum_{\mu}\binom{X}{\mu}\epsilon^{\|\mu\|}t_d^{\mu_1}t_{2d}^{\mu_2}\ldots \right] = \sum_{\mu}\frac{(\epsilon t_d)\mu_1}{1^{\mu_1}\mu_1!}\frac{(\epsilon^2 t_{2d})\mu_2}{2^{\mu_2}\mu_2!}\ldots = \prod_{\ell=1}^{\infty}\sum_{\mu_\ell=1}^{\infty}\frac{1}{\mu_\ell !}\left(\frac{\epsilon^\ell t_{d\ell}}{\ell}\right)^{\mu_\ell}
$$
$$
= \prod_{\ell=1}^{\infty}\exp\left( \frac{\epsilon^\ell t_{d\ell}}{\ell} \right).
$$
To get the stated form of this expression, use the multiplicative property of the exponential series.
\end{proof}

Apply this observation to the calculation of $\mathbb{E}_{H_f}[F^{(\tau)}]$: introduce $n$ nilpotent elements $\epsilon_i$ with respective order $r_i+1$ and let $\phi$ be the map $\Q[\textbf{t}]$-linear map that sends $(\epsilon_i)^j\mapsto 1$ whenever $j\leq r_i$.
Then
$$
\mathbb{E}_{H_f}\left[ F^{(\tau)}(\textbf{t}) \right] = \prod_{i=1}^n \mathbb{E}_{S_{r_i}}[\prod_{\ell=1}^{\infty}(1+t_{d_i\ell})^{X_{\ell}}] = \phi \left[\prod_{i=1}^{n}\exp\left( \sum_{\ell}\epsilon_i^{\ell}\frac{t_{d_i\ell}}{\ell} \right)\right] = \phi \left[\exp\left( \sum_{i=1}^{n}\sum_{\ell}\epsilon_i^{\ell}\frac{t_{d_i\ell}}{\ell} \right)\right].
$$
Relabel the terms $d_i\ell = k$ back by summing only over $\{i: d_i|k\}$ and replacing $\ell = k/d_i$. The resulting expression becomes
\begin{equation}\label{eq:final_form_for_young_cosets}
\mathbb{E}_{H_f}\left[ F^{(\tau)}(\textbf{t}) \right] = \phi \left[\exp\left( \sum_{k=1}^\infty \sum_{\{i: d_i|k\}}\epsilon_i^{k/d_i}d_i \, \frac{t_k}{k} \right)\right].
\end{equation}

To bring the expression to the desired form, replace the $\epsilon_i$'s with the symbols $\epsilon_p\in R_q$ introduced above. Recall that the $(S_{r_i})^{\times d_i}$-factor of $H_f$ corresponds to the irreducible factor $p_i$ of degree $d_i$ that divides $f$ precisely $r_i$ times. Thus, using the symbol $\epsilon_{p_i}$ and its evaluation of $f$ as defined in Equation \ref{eq:evaluation_on_epsilons}, there is an equality
$$
\phi(\epsilon_i^d) = \epsilon_{p_i}^d(f) = \begin{cases}
1 \quad d\leq r_i \\ 0 \quad \text{otherwise}
\end{cases}.
$$
More generally, $\phi \left[\epsilon_{1}^{j_1}\ldots \epsilon_n^{j_n}\right]=1$ if $j_i\leq r_i$ for all $i$ and is $0$ otherwise. This is precisely the value of $\epsilon_{p_1}^{j_1}\ldots \epsilon_{p_n}^{j_n} = \epsilon_{p_1^{j_1}\ldots p_n^{j_n}}$ evaluated on $f$, since the $p_i$'s are coprime. It follows that Equation \ref{eq:final_form_for_young_cosets} can be written with $\epsilon_{p_i}$ in the place of $\epsilon_i$ everywhere.

For every other irreducible polynomial $p\neq p_1,\ldots,p_n$ the evaluation $\epsilon_p(f)=0$, so adding all such symbols into Equation \ref{eq:final_form_for_young_cosets} does not change the resulting evaluation. It does, however, allow us to write the sum uniformly without making any reference to the divisors of $f$. This is the sought after form of the generating function.

Lastly, to get the individual expectations $\binom{X}{\mu}(f)=\mathbb{E}_{H_f}\left[ \binom{X}{\mu}^{(\tau)} \right]$ one needs only to look compute the $\mu$-th partial derivative with respect to $\textbf{t}$. This resulting expression is as stated.
\end{proof}

\subsection{The case of all monic polynomials} \label{subsec:all_polynomials}
We now complete the calculation in the case where one does not impose any restrictions on roots, i.e. we are considering the $S^N$ action on the whole of $\A^N$. The cohomology groups of $\A^N$ are very simple: they are known to be $\Q_\ell(0)$ in dimension $0$ and vanish in all higher degrees. Moreover, the induced $S_N$-action on these cohomology groups is trivial. Thus Theorem \ref{thm:intro_main} reduces to the surprising Corollary \ref{cor:intro_equal_expectations}.
\begin{proof}[Proof of Corollary \ref{cor:intro_equal_expectations}]
For every $S_N$-class function $\chi$ Theorem \ref{thm:intro_main} gives an equality
$$
\frac{1}{q^N}\sum_{f\in \Poly^N(\F_q)} \chi(\sigma_f) = \langle H^0(\A^N;\Q_\ell), \chi\rangle_{S_N} = \frac{1}{N!}\sum_{g\in S_N} 1\cdot \chi(g).
$$
\end{proof}
\begin{remark}
In Corollary \ref{cor:intro_equal_expectations}, there is no reason to restrict attention to the group action $S_N \curvearrowright \A^N$: one can more generally consider any subgroup of $\Aut(\A^N)(\F_q) = \operatorname{Aff}_N(\F_q)$ and get a similar result:
\begin{thm}[\textbf{Equal expectation for $G\leq \operatorname{Aff}_N(\F_q)$}]
Let a subgroup $G\leq \operatorname{Aff}_N(\F_q)$ act on $\A^N$ naturally, and denote the resulting quotient $\A^N/G$ by $X_G$. Then every $G$-class function $\chi$ induces random variables on the two uniform probability spaces $X_G(\F_q)$ and $G$, and these satisfy
$$
\frac{\left|X_G(\F_q)\right|}{q^N}\mathbb{E}_{X_G(\F_q)}[\chi] = \mathbb{E}_G[\chi].
$$
\end{thm}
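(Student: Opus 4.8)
The plan is to apply the smooth form of Theorem \ref{thm:intro_main} (Equation \ref{eq:intro_G-L-trace-formula}) to the cover $\widetilde{X} = \A^N \to X_G = \A^N/G$, running exactly the argument used to prove Corollary \ref{cor:intro_equal_expectations} with $S_N$ replaced by the arbitrary subgroup $G$. First I would check that the hypotheses are met: the group $\operatorname{Aff}_N(\F_q) = \F_q^N \rtimes \operatorname{GL}_N(\F_q)$ is finite, so $G$ is a finite group; it acts on $\A^N$ by algebraic automorphisms defined over $\F_q$, hence the action commutes with $\Fr_q$; and $\widetilde{X} = \A^N$ is smooth of dimension $N$. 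Fixing a prime $\ell \gg 1$ coprime to $q|G|$ as in the theorem, Equation \ref{eq:intro_G-L-trace-formula} applies verbatim.

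Second, I would record the \'etale cohomology of affine space. Over $\overline{\F}_q$ one has $\Ho^0(\A^N_{\overline{\F}_q}; \Q_\ell) = \Q_\ell$ with trivial Galois action, so the only Frobenius eigenvalue that occurs is $\lambda = 1$, while $\Ho^i(\A^N_{\overline{\F}_q}; \Q_\ell) = 0$ for $i > 0$. Since $\A^N$ is geometrically connected, $\Ho^0$ is generated by the canonical constant section, which every automorphism in $G$ fixes; hence $G$ acts trivially on $\Ho^0$, that is, $\Ho^0(\A^N)_{\lambda = 1}$ is the trivial $G$-representation. Substituting into the right-hand side of Equation \ref{eq:intro_G-L-trace-formula}, every summand with $i > 0$ or $\lambda \neq 1$ vanishes, and what survives is $q^N \langle \mathbf{1}, \chi \rangle_G = q^N \mathbb{E}_G[\chi]$, since the character inner product of $\chi$ against the trivial representation is precisely $\frac{1}{|G|}\sum_{g\in G}\chi(g)$.

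Finally, I would interpret the left-hand side. By Definition \ref{def:intro_evaluate_sigmax}, $\chi$ is turned into a random variable on $X_G(\F_q)$ via $\chi(x) := \chi(\sigma_x)$, so $\sum_{x\in X_G(\F_q)} \chi(\sigma_x) = |X_G(\F_q)| \cdot \mathbb{E}_{X_G(\F_q)}[\chi]$. Combining this with the evaluation of the right-hand side gives $|X_G(\F_q)| \cdot \mathbb{E}_{X_G(\F_q)}[\chi] = q^N \mathbb{E}_G[\chi]$, which is the claimed identity after dividing by $q^N$.

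There is essentially no serious obstacle: the content is entirely carried by Theorem \ref{thm:intro_main}, and the only cohomological input is the vanishing of $\Ho^{>0}(\A^N)$ together with the geometrically obvious triviality of the $G$-action on $\Ho^0$. The only two points deserving a word of care are the verification that the affine action commutes with Frobenius, which is automatic because $G \leq \operatorname{Aff}_N(\F_q)$ is defined over $\F_q$, and a confirmation of the inner-product normalization so that the trivial-representation term reproduces exactly $\mathbb{E}_G[\chi]$ rather than a scalar multiple of it.
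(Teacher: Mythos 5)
Your proposal is correct and is precisely the argument the paper intends: the paper states this theorem as an immediate generalization of Corollary \ref{cor:intro_equal_expectations} (whose proof it does spell out), and your argument is exactly that proof with $S_N$ replaced by an arbitrary finite $G\leq \operatorname{Aff}_N(\F_q)$ --- apply Equation \ref{eq:intro_G-L-trace-formula} to $\widetilde{X}=\A^N$, use $\Ho^0(\A^N)=\Q_\ell$ with trivial $G$-action and eigenvalue $\lambda=1$ plus vanishing higher cohomology, and identify $\langle \mathbf{1},\chi\rangle_G$ with $\mathbb{E}_G[\chi]$. Your added care about Frobenius-equivariance of the action and the triviality of the $G$-action on $\Ho^0$ is exactly the right set of hypotheses to check.
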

We will not pursue this idea any further.
\end{remark}
\begin{example}[\textbf{Case $\chi = \binom{X}{\mu}$}]
The explicit form of $\binom{X}{\mu}(f)$ given by Theorem \ref{thm:intro_evaluating_xmu} can be used to unpack Equation \ref{eq:equal-expectation} and get definite facts regarding polynomials.
\begin{claim}
Equation \ref{eq:equal-expectation} is equivalent to the well known Necklace relations: for every degree $d$ let $N_d$ be the number of degree $d$ monic irreducible polynomials in $\F_q[t]$, then for every $k\in \N$ one has
\begin{equation}
\sum_{d|k} dN_d = q^k.
\end{equation}

More specifically, for every multi-index $\mu=(\mu_1,\mu_2,\ldots)$ the following equality holds
\begin{equation}
\mathbb{E}_{\Poly^N(\F_q)}\left[\binom{X}{\mu}\right] = \mathbb{E}_{S_N}\left[\binom{X}{\mu}\right]\prod_{k=1}^\infty \left(\frac{1}{q^k}\sum_{d|k} d N_d\right)^{\mu_k}.
\end{equation}
\end{claim}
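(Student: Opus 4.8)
The plan is to compute both sides of Equation \ref{eq:equal-expectation} through a single bivariate generating function in $u$ (tracking degree) and $\textbf{t}$ (tracking the statistics $\binom{X}{\mu}$), and then to read off the claim by comparing coefficients. First I would start from the generating-function form of Theorem \ref{thm:intro_evaluating_xmu} and regroup its exponent by irreducible factors: writing $d_p = \deg(p)$ and substituting $k = d_p\ell$, the factor $\deg(p)\cdot\tfrac1k = \tfrac1\ell$ collapses the double sum so that
\[
F(\textbf{t})_{(f)} = \exp\left(\sum_{k}\sum_{p\in\Irr_{|k}}\deg(p)\,\epsilon_p^{k/\deg(p)}\frac{t_k}{k}\right)(f) = \prod_{p}\exp\left(\sum_{\ell\ge 1}\epsilon_p^{\ell}\frac{t_{d_p\ell}}{\ell}\right)(f),
\]
the product ranging over all monic irreducibles. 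The key observation is that, although evaluation on $f$ is \emph{not} multiplicative on $R_q$, it does factor over the irreducible factorization $f = \prod_p p^{v_p(f)}$: a monomial $\prod_p\epsilon_p^{m_p}$ equals $\epsilon_{\prod_p p^{m_p}}$, which evaluates on $f$ to $1$ precisely when $m_p\le v_p(f)$ for every $p$, i.e. to $\prod_p\epsilon_p^{m_p}(f)$. This coprimality is exactly what rescues multiplicativity, so $F(\textbf{t})_{(f)}$ is the product over $p$ of the local factor obtained by truncating the $p$-th exponential at the $v_p(f)$-th power of $\epsilon_p$.

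I would then assemble these local factors into an Euler product. Summing $u^{\deg f}F(\textbf{t})_{(f)}$ over all monic $f$ and invoking unique factorization gives, with $\Phi(u,\textbf{t}) := \sum_N u^N\sum_{f\in\Poly^N(\F_q)}F(\textbf{t})_{(f)}$, the identity $\Phi = \prod_{d\ge 1}L_d^{N_d}$, where a geometric summation over the multiplicity $r = v_p(f)$ produces a local factor depending only on $d=\deg p$, namely $L_d(u,\textbf{t}) = (1-u^d)^{-1}\exp\!\big(\sum_\ell u^{d\ell}t_{d\ell}/\ell\big)$. Relabeling $k = d\ell$ once more collects everything into
\[
\Phi(u,\textbf{t}) = \Big(\prod_{d}(1-u^d)^{-N_d}\Big)\exp\left(\sum_k\frac{u^k t_k}{k}\sum_{d\mid k}dN_d\right) = \frac{1}{1-qu}\exp\left(\sum_k\frac{u^k t_k}{k}\sum_{d\mid k}dN_d\right),
\]
the last equality being the unconditional zeta identity $\prod_d(1-u^d)^{-N_d} = \sum_N q^N u^N$, valid simply because there are exactly $q^N$ monic polynomials of degree $N$.

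For the right-hand side I would invoke the exponential formula for the symmetric group (the cycle-index series), giving $\sum_N u^N\mathbb{E}_{S_N}[F(\textbf{t})] = (1-u)^{-1}\exp\!\big(\sum_k u^k t_k/k\big)$, which incidentally re-derives Fact \ref{fact:expectation}. Passing from sums to expectations on the polynomial side divides by $q^N$, i.e. substitutes $u\mapsto u/q$; the prefactor then becomes $(1-u)^{-1}$ and matches the symmetric-group side exactly. Setting $a_k := q^{-k}\sum_{d\mid k}dN_d$, extraction of the coefficient of $u^N\textbf{t}^\mu$ yields, for $\|\mu\|\le N$,
\[
\mathbb{E}_{\Poly^N(\F_q)}\left[\binom{X}{\mu}\right] = \frac{\prod_k a_k^{\mu_k}}{\prod_k k^{\mu_k}\mu_k!} = \mathbb{E}_{S_N}\left[\binom{X}{\mu}\right]\prod_{k}a_k^{\mu_k},
\]
which is precisely the displayed ``more specifically'' formula. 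Since the $\binom{X}{\mu}$ span all class functions, Equation \ref{eq:equal-expectation} holds for every $\chi$ if and only if $\prod_k a_k^{\mu_k}=1$ for all $\mu$; testing single-box multi-indices (with $N\ge k$) forces $a_k = 1$ for each $k$, i.e. $\sum_{d\mid k}dN_d = q^k$, the Necklace relations.

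I expect the main obstacle to be the factorization-of-evaluation step: one must argue carefully that the non-multiplicativity of $f\mapsto\epsilon_g(f)$ does no harm once the symbols are organized by distinct — hence coprime — irreducibles, so that the naive Euler product is in fact correct. The remaining manipulations (the geometric summation over multiplicities, the two relabelings $k=d\ell$, and the final coefficient extraction) are routine provided one works consistently in the ring of formal power series in $u$ and $\textbf{t}$, where for each fixed $N$ only finitely many terms contribute.
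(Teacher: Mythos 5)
Your proposal is correct, and it reaches the claim by a genuinely different route than the paper. The paper works at a fixed degree $N$ and never leaves $\Poly^N(\F_q)$: its key input is Lemma \ref{lem:average_of_epsilon}, that $\mathbb{E}_{\Poly^N(\F_q)}[\epsilon_g]=q^{-\deg(g)}$ for $\deg(g)\le N$ (and $0$ otherwise), packaged as the statement that the expectation factors as $\phi_\epsilon\circ\Lambda$, where $\Lambda:\epsilon_g\mapsto(\epsilon/q)^{\deg(g)}$ is an algebra homomorphism into $\Q[\epsilon]/(\epsilon^{N+1})$. Because $\Lambda$ is a homomorphism, it passes through the exponential of Theorem \ref{thm:intro_evaluating_xmu} in one stroke, the inner sum $\sum_{p\in\Irr_{|k}}\deg(p)$ reindexes to $\sum_{d|k}dN_d$, and Fact \ref{fact:expectation} supplies the $S_N$ side. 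You instead sum over all degrees at once: you factor the evaluation of $F(\textbf{t})$ on $f$ into local factors over the distinct irreducibles dividing $f$, assemble these into an Euler product for $\sum_f u^{\deg f}F(\textbf{t})_{(f)}$ via the geometric-series swap over multiplicities, and close with the zeta identity $\prod_d(1-u^d)^{-N_d}=(1-qu)^{-1}$, re-deriving Fact \ref{fact:expectation} from the cycle-index series rather than citing it. The coprimality step you flag as the main obstacle is sound, and is in fact the same observation the paper uses elsewhere (in the square-free example and at the end of the proof of Theorem \ref{thm:intro_evaluating_xmu}, where $\epsilon_{p_1}^{j_1}\cdots\epsilon_{p_n}^{j_n}=\epsilon_{p_1^{j_1}\cdots p_n^{j_n}}$ is evaluated using that the $p_i$ are coprime). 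Both arguments bottom out in the same elementary count -- the number of degree-$N$ multiples of $g$ is $q^{N-\deg(g)}$ -- which is what makes your zeta identity and the paper's Lemma \ref{lem:average_of_epsilon} unconditional, so neither derivation is circular; both then deduce the equivalence with the Necklace relations from the same unconditional product formula by testing single-box multi-indices. What the paper's route buys is brevity and locality in $N$; what yours buys is a structural explanation of the coincidence in Corollary \ref{cor:intro_equal_expectations}: the polynomial Euler product and the symmetric-group cycle index have the same exponential shape, and the substitution $u\mapsto u/q$ matches them exactly.
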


\begin{proof}
The major challenge in computing the evaluation $\binom{X}{\mu}(f)$ directly is that one has to deal with the irreducible decomposition of $f$, and these can take many forms. It turns out that this challenge disappears completely when one in only interested in the average value over all $f\in \Poly^N(\F_q)$, as the following observation shows. 

\begin{lem}[\textbf{Average of $\epsilon_g$}]\label{lem:average_of_epsilon}
Let $\mathbb{E}$ denote the expectation over the set $\Poly^N(\F_q)$ of monic polynomials of degree $N$ as a uniform probability space. Then for every polynomial $g$, the symbol $\epsilon_g$ satisfies
\begin{equation}
\mathbb{E}[\epsilon_g] = \begin{cases}
\frac{1}{q^{\deg(g)}}\quad \deg(g)\leq N \\ 0\quad \text{Otherwise}.
\end{cases}
\end{equation}

Stated equivalently, let $\epsilon$ be a formal nilpotent element of order $N+1$. Then the $\Q$-algebra homomorphisms $R_q \mor{\Lambda} \Q[\epsilon]/(\epsilon^{N+1})$ defined by linearly extending
$$
\Lambda:\epsilon_g \mapsto \left(\frac{\epsilon}{q}\right)^{\deg(g)} \quad \forall g\in \F_q[t]
$$
and the $\Q$-module map $\phi_\epsilon:\Q[\epsilon]/(\epsilon^{N+1})\mor{}\Q$ defined by sending $\epsilon^j\mapsto 1$ for all $j\leq N$ satisfy the relation 
$$
\phi_\epsilon\circ \Lambda = \mathbb{E}.
$$

Thus the expectation factors through the homomorphism $\Lambda$, which sends all $\epsilon_g$ symbols to a single expression involving $\epsilon$.
\end{lem}
\begin{proof}
First, if $\deg(g)>N$ then $\epsilon_g\equiv 0$ on $\Poly^N$ and thus $\mathbb{E}[\epsilon_g]=0$. Next, if $\deg(g)\leq N$, then since $\F_q[t]$ is a UFD, division gives a bijection
$$
\left\{ f\in \Poly^N(\F_q) \text{ s.t. } g|f \right\} \longleftrightarrow \Poly^{(N-\deg(g))(\F_q)}.
$$
Thus the number of monic polynomial divisible by $g$ is precisely $q^{N-\deg(g)}$. Evaluating the expectation amounts to dividing this count by the cardinality of $\Poly^N(\F_q)$, which is $q^N$.
\end{proof}

Let $F(\textbf{t})$ be the generating function of the $\binom{X}{\mu}$'s as introduced in the previous section. Recall that by Theorem \ref{thm:intro_evaluating_xmu}
$$
F(\textbf{t}) = \exp\left( \sum_{k=1}^\infty \sum_{p\in \Irr_{|k}}\deg(p)\epsilon_{p}^{\frac{k}{\deg(p)}} \, \frac{t_k}{k} \right)
$$
as functions on $\Poly^N(\F_q)$. Using Lemma \ref{lem:average_of_epsilon} proved above, one can readily compute $\mathbb{E}[F(\textbf{t})]$ by first applying $\Lambda$: Since the evaluation $\Lambda:\epsilon_g \mapsto \left(\frac{\epsilon}{q}\right)^{\deg(g)}$ is a $\Q$-algebra homomorphism, it commutes with taking the exponential series
$$
\Lambda(F) = \exp\left( \sum_{k=1}^\infty \sum_{p\in \Irr_{|k}}\deg(p)  \Lambda(\epsilon_{p})^{\frac{k}{\deg(p)}} \, \frac{t_k}{k} \right) = \exp\left( \sum_{k=1}^\infty \sum_{p\in \Irr_{|k}}\deg(p)  \left(\frac{\epsilon}{q}\right)^k \, \frac{t_k}{k} \right).
$$
Collect all terms that depend only on $k$
$$
\exp\left( \sum_{k=1}^\infty \frac{\epsilon^k t_k}{k} \cdot \frac{1}{q^k}\sum_{p\in \Irr_{|k}}\deg(p) \right).
$$
Reindex the sum $\displaystyle{\sum_{p\in \Irr_{|k}}\deg(p)}$ based on the degrees $d=\deg(p)$, and observe that it is precisely the sum $\sum_{d|k} d N_d$ that appears in the Necklace relations.

By differentiation with respect to $\textbf{t}$ one finds that the $\mu$-th coefficient of this generating function is
$$
\Lambda \binom{X}{\mu} = \prod_{k=1}^\infty \frac{\epsilon^{k\mu_k}}{k^{\mu_k} \mu_k!} \cdot \left(\frac{1}{q^k}\sum_{d|k} d N_d\right)^{\mu_k} = \epsilon^{\|\mu \|} \prod_{k=1}^\infty \frac{1}{k^{\mu_k} \mu_k!} \cdot \left(\frac{1}{q^k}\sum_{d|k} d N_d\right)^{\mu_k}.
$$
Furthermore, Fact \ref{fact:expectation} produced an equality
$$
\phi_\epsilon\left(\epsilon^{\|\mu \|} \prod_{k=1}^\infty \frac{1}{k^{\mu_k} \mu_k!}\right) = \mathbb{E}_{S_N}\left[ \binom{X}{\mu} \right]
$$
which produces the desired result using $\phi_\epsilon\circ \Lambda = \mathbb{E}_{\Poly^N(\F_q)}$.
\end{proof}
\end{example}

\subsection{Statistics on cosets of Young subgroups}\label{subsec:young_cosets}
One can rephrase Theorem \ref{thm:intro_evaluating_xmu} as a purely combinatorial statement regarding the cycle-decomposition statistics of cosets of Young subgroups. This could be stated as follows.

As stated above, for every multi-index $\mu=(\mu_1,\mu_2,\ldots)$, the character polynomial $\binom{X}{\mu}$ counts the number of ways to arrange cycles into sets of $\mu_k$ many $k$-cycles for every $k$.
\begin{thm}[\textbf{Statistics on Young cosets}]\label{thm:average_young}
Let $H = S_{\lambda_1}\times \ldots \times S_{\lambda_m}$ be a Young subgroup of $S_N$. Consider a coset $gH$ where $g\in S_N$ is in the normalizer of $H$ (i.e. $gH=Hg$). Then conjugation by $g$ induces a permutation $\tau$ of the $S_{\lambda_i}$ factors, say
$$
H = S_{r_1}^{d_1}\times \ldots \times S_{r_n}^{d_n}
$$
and $\tau$ cyclically permutes the factors in each $S_{r_i}^{d_i}$.

Then for every multi-index $\mu=(\mu_1,\mu_2,\ldots)$, the expected value of $\binom{X}{\mu}$ on the coset $gH$ is given by the expression
\begin{equation} \label{eq:young_statistics}
\mathbb{E}_{S_N}\left[\binom{X}{\mu}\right]\Phi\left[\prod_{k=1}^{\infty}\left(\sum_{\{i: d_i|k\}}\epsilon_i^{\frac{k}{d_i}}d_i\right)^{\mu_k}\right]
\end{equation}
where the symbol $\epsilon_i$ is a formal nilpotent element of order $r_i+1$, and one eliminates these applying the $\Q$-linear transformation
$$
\Phi:\Q[\epsilon_1,\ldots,\epsilon_n]/(\epsilon_i^{r_i+1}) \mor{} \Q \quad, \qquad \Phi: (\epsilon_i)^j \mapsto 1 \quad \forall j\leq r_i.
$$

In particular, for every cycle type $\mu=(\mu_1,\mu_2,\ldots,\mu_N)$, the number of elements in $gH$ with cycle-type $\mu$ is given by the expression
\begin{equation} \label{eq:intersection_young_conj}
\frac{|H|\cdot |C_\mu|}{N!}\Phi\left[\prod_{k=1}^{N}\left(\sum_{\{i: d_i|k\}}\epsilon_i^{\frac{k}{d_i}}d_i\right)^{\mu_k}\right]
\end{equation}
where $C_\mu$ is the conjugacy class of all elements with cycle-type $\mu$, and $\epsilon_i$ and $\Phi$ are as above.
\end{thm}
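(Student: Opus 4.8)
The plan is to recognize Theorem \ref{thm:average_young} as the purely combinatorial content already extracted inside the proof of Theorem \ref{thm:intro_evaluating_xmu}, stripped of its polynomial origin. There the stabilizer $H_f$ was precisely a Young subgroup $S_{r_1}^{d_1}\times\cdots\times S_{r_n}^{d_n}$ and the Frobenius lift $\tau$ cyclically shifted the $d_i$ copies of $S_{r_i}$; the computation never used anything about $f$ beyond this group-theoretic data. So my first step is a reduction that lets me replace the given $g\in N(H)$ by a clean representative. Since $\mathbb{E}_{gH}[\chi]=\frac{1}{|H|}\sum_{x\in gH}\chi(x)$ depends only on the coset $gH$ as a set, I am free to change the representative within $H$. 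The quotient $N(H)/H$ is realized by permutations of the equal-sized factors, and each class admits a pure block-shift lift --- one matching the coordinates of a block to those of its image in order, with no internal twist. Decomposing the induced factor-permutation $\tau$ into cycles and grouping by cycle gives the normal form $H=S_{r_1}^{d_1}\times\cdots\times S_{r_n}^{d_n}$ with $\tau$ a single $d_i$-cycle on block $i$, so that $\tau$ becomes exactly the shift $(i,j,k)\mapsto(i,j,k+1)$ of the earlier proof.

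Once this reduction is in place, the remaining steps are a direct re-run of the argument behind Theorem \ref{thm:intro_evaluating_xmu}. Writing $\chi^{(g)}(h)=\chi(gh)$ so that $\mathbb{E}_{gH}[\chi]=\mathbb{E}_H[\chi^{(g)}]$, I would re-establish the analog of Claim \ref{claim:X_tau}, namely $X_r^{(g)}=\sum_{\{i:\,d_i\mid r\}}X_{r/d_i}\circ m_i$, with $m_i$ the projection that multiplies the $d_i$ permutations around the $i$-th cyclic block. This identifies the generating function $F^{(g)}$ as a function factoring through the product of projections $m=(m_1,\dots,m_n)\colon H\to S_{r_1}\times\cdots\times S_{r_n}$, which is the measure-preserving $\tfrac{|H|}{|\prod_i S_{r_i}|}$-to-$1$ map from the earlier Claim. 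Averaging block-by-block and invoking Theorem \ref{thm:CEF_expected_value} on each factor converts the average into the exponential appearing in Equation \ref{eq:final_form_for_young_cosets}; extracting the $\mu$-th coefficient by differentiating in $\textbf{t}$, and renaming the nilpotent-cleanup map $\phi$ as $\Phi$, yields exactly Formula \ref{eq:young_statistics}.

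The counting statement \ref{eq:intersection_young_conj} then follows by specialization. When $\|\mu\|=N$ the character polynomial $\binom{X}{\mu}$ is the indicator function of the conjugacy class $C_\mu$, so $\mathbb{E}_{gH}\big[\binom{X}{\mu}\big]=|gH\cap C_\mu|/|H|$, while $\mathbb{E}_{S_N}\big[\binom{X}{\mu}\big]=|C_\mu|/N!$; substituting both into Formula \ref{eq:young_statistics} and solving for $|gH\cap C_\mu|$ gives the stated expression. I expect the only genuine obstacle to be the opening reduction: one must check carefully that a general normalizer element can be replaced, within its coset, by an untwisted block shift, and that grouping the factor-permutation into cycles reproduces the precise $S_{r_i}^{d_i}$ normal form assumed by the theorem. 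Everything after that is the bookkeeping already carried out in Section \ref{sec:polys}.
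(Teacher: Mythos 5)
Your proposal is correct and follows essentially the same route as the paper: the paper likewise proves Theorem \ref{thm:average_young} by re-running the proof of Theorem \ref{thm:intro_evaluating_xmu} with the roots replaced by a formal set of pairs and the Frobenius by the shift $\tau:(i,k)\mapsto (i,k+1)$, then extracting the $\mu$-th Taylor coefficient, substituting $\prod_k \frac{1}{k^{\mu_k}\mu_k!}=\mathbb{E}_{S_N}\left[\binom{X}{\mu}\right]$ via Fact \ref{fact:expectation}, and handling the case $\|\mu\|=N$ by the indicator-function observation. Your explicit opening reduction --- that any $g\in N(H)$ may be replaced within its coset by the untwisted block shift, valid because $\tau^{-1}g$ preserves every $H$-orbit and hence lies in $H$ --- makes rigorous a step the paper leaves implicit, but it is a gap-filling of the same argument rather than a different approach.
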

\begin{example}[\textbf{Expected number of $k$-cycles in $gH\subseteq S_N$}]
Let $H$ and $g$ be as in the statement of Theorem \ref{thm:average_young}. Compute the expected number of $k$-cycles in an element $\sigma\in gH$, i.e. the average of $X_k$ over $gH$, using Equation \ref{eq:young_statistics} with $\mu_k=1$ and $\mu_j=0$ for all $j\neq k$. It takes the form
$$
\frac{1}{k}\Phi\left(\sum_{\{i: d_i|k\}}\epsilon_i^{k/d_i}d_i\right) = \frac{1}{k}\sum_{\substack{\{i: d_i|k\}\\ d_i r_i\geq k}} d_i.
$$
\end{example}
\begin{proof}[Proof of Theorem \ref{thm:average_young}]
The proof given for Theorem \ref{thm:intro_evaluating_xmu} applies more generally in this case: replacing the set of roots $\{\alpha_{i,k: 1\leq i\leq n, k\in \Z/d_i\Z}\}$ by a formal set of pairs $\{(i,k): 1\leq i\leq n,k\in \Z/d_i\Z \}$, and the Frobenius permutation by the formal permutation $\tau: (i,k)\mapsto (i, k+1)$, the proof proceeds as presented above.

This shows that our derivation leading up to Equation \ref{eq:final_form_for_young_cosets} applies and the expected values of $\binom{X}{\mu}$ on $gH$ are given by the generating function
$$
\Phi \left[\exp\left( \sum_{k=1}^\infty \sum_{\{i: d_i|k\}}\epsilon_i^{k/d_i}d_i \, \frac{t_k}{k} \right)\right].
$$
Extract the $\mu$-th coefficient by derivation with respect to $\textbf{t}$. The resulting Taylor coefficient is
$$
\Phi \prod_{k=1}^{\infty} \frac{1}{k^{\mu_k}\mu_k!}\left(\sum_{\{i: d_i|k\}}\epsilon_i^{k/d_i}d_i\right)^{\mu_k}.
$$
Now use Fact \ref{fact:expectation} to substitute $\prod_{k=1}^{\infty} \frac{1}{k^{\mu_k}\mu_k!} = \mathbb{E}_{S_N}\left[\binom{X}{\mu}\right]$ and arrive at the final form of our equation.

Lastly, the statement regarding the case $\|\mu\| = N$ follows from the observation that $\binom{X}{\mu}$ is the indicator function of $C_{\mu}$.
\end{proof}


\end{document}